\newcommand{\verticalbinomial}{\genfrac{|}{|}{0pt}{}}
\newcommand{\LieAlg}{\mathbf{Lie}_R}
\newcommand{\Grp}{\mathbf{Grp}}
\newcommand{\XMod}{\mathbf{XMod}}
\newcommand{\Set}{\mathbf{Set}}
\newcommand{\Act}{\mathbf{Act}}
\newcommand{\Mag}{\mathit{Mag}}
\newcommand{\Ker}{\mathit{Ker}}
\newcommand{\Coker}{\mathit{Coker}}
\newcommand{\A}{\mathbb{A}}
\theoremstyle{plain} 
\newtheorem{thm}{Theorem}[section] 
\newtheorem{lemma}[thm]{Lemma} 
\newtheorem{prop}[thm]{Proposition}
\theoremstyle{definition} 
\newtheorem{defi}[thm]{Definition}
\theoremstyle{remark}
\newtheorem{rmk}[thm]{Remark}
\newtheorem{ex}[thm]{Example}
\title{Compatible Actions of Lie Algebras}
\author{Davide di Micco}
\date{\today}
\address{Università degli Studi di Milano, Via Saldini 50, 20133 Milano, Italy}
\email{davide.dimicco@unimi.it}
\begin{document}
\begin{abstract}
We study compatible actions (introduced by Brown and Loday in their work on the non-abelian tensor product of groups) in the category of Lie algebras over a fixed ring. We describe the Peiffer product via a new diagrammatic approach, which specializes to the known definitions both in the case of groups and in the case of Lie algebras. We then use this approach to transfer a result linking compatible actions and pairs of crossed modules over a common base object $L$ from groups to Lie algebras. Finally, we show that the Peiffer product, naturally endowed with a crossed module structure, has the universal property of the coproduct in~$\XMod_L(\LieAlg)$.
\end{abstract}

\keywords{Lie algebras, Compatible actions, Crossed modules, Peiffer product.}

\maketitle

\section*{Introduction}

The aim of this paper is to study compatible actions of Lie algebras (introduced in~\cite{Ell91}) and to compare them with compatible actions of groups (first studied in~\cite{BL84}). With this idea in mind, we try to use a diagrammatic and internal approach whenever it is possible: to do so we take advantage of the equivalence between the internal actions (introduced in~\cite{Jan03}) and the usual actions of Lie algebras, as well as the equivalence between internal crossed modules and crossed modules of Lie algebras (see~\cite{Jan03}).

In Brown and Loday's article~\cite{BL84} it is stated that two groups $M$ and $N$ act on each other compatibly if and only if there exists a group $L$ and two crossed module structures $(M\xrightarrow{\mu}L,\psi_M)$ and $(N\xrightarrow{\nu}L,\psi_N)$.

One of the two implications above in the Lie algebra case has been mentioned by Ellis in~\cite{Ell91} while the other appears as a remark in~\cite{Khm99}. We provide a proof of this result which, thanks to its intrinsic form, is valid in both cases. In order to do so we need to consider the \emph{Peiffer product} of two Lie algebras acting on each other compatibly (corresponding to the Peiffer product of groups, so named in~\cite{GH87}, but first defined in~\cite{Whi41}): this is already present in~\cite{Khm99}, but we use a different (yet equivalent) construction which is the same for groups and Lie algebras.

A consequence of this result is that the non-abelian tensor product of Lie algebras introduced in~\cite{Ell91} can naturally be interpreted as a tensor product of compatible actions or as a tensor product of crossed modules over a common base object.

Finally we prove in Theorem~\ref{thm:remark 2.16 in BL in LieAlg} that the Peiffer product can be endowed with a crossed module structure making it a coproduct in $\XMod_L(\LieAlg)$ exactly as proved in~\cite{Bro84} in the case of groups.

As a consequence we get that the internal definition of the Peiffer product given in~\cite{CMM17} coincides with the one introduced in~\cite{Khm99}.

The paper is organized as follows. In the first section we recall basic definitions and results. In the second section we show the link between the notions of compatible actions for groups and for Lie algebras, giving the idea of a possible generalization to a semi-abelian category~\cite{JMT02} as it will be considered in the paper~\cite{dMVdL19} in preparation; we show that two crossed modules with a common codomain in $\LieAlg$ induce compatible actions and, in order to prove the converse, we first give an internal construction of the Peiffer product of two Lie algebras and then we endow it with crossed module structures. Lastly, in the third section we show that the coproduct in $\XMod_L(\LieAlg)$ can be obtained through the Peiffer product and we draw some consequences of this result.

\section{Preliminaries}
We start by recalling some well-known facts that we will need in the following and in the meantime we use this section to fix some notation.

\begin{defi}
\label{defi:R-Lie algebra}
Let $R$ be a commutative ring and let $M$ be an $R$-module. We say that $M$ is a \textit{Lie algebra over} $R$ if it is endowed with a binary operation 
\[
[-,-]\colon M\times M\to M
\]
called \textit{Lie bracket}, such that the following conditions hold:
\begin{itemize}
\item[1)] $[ax+by,z]=a[x,z]+b[y,z]$ and $[x,ay+bz]=a[x,y]+b[x,z]$ ($R$-bilinearity);
\item[2)] $[x,x]=0$ and $[x,y]+[y,x]=0$ (alternating);
\item[3)] $[[x,y],z]+[[y,z],x]+[[z,x],y]=0$ (Jacobi identity).
\end{itemize}
\end{defi}

\begin{rmk}
We recall that the above definition is redundant: notice that the two conditions in $1)$ are equivalent due to $2)$, so it suffices to check just one of them. Moreover, $[x,x]=0$ always implies $[x,y]+[y,x]=0$, and the converse is true whenever the multiplication by $2$ is injective in $M$ (that is, $M$ is $2$-torsion free). Furthermore, the equation $[[x,y],z]+[[y,z],x]+[[z,x],y]=0$ is equivalent to $[x,[y,z]]+[y,[z,x]]+[z,[x,y]]=0$ thanks to $2)$.
\end{rmk}

\begin{defi}
Let $M$ and $L$ be $R$-Lie algebras. A morphism of $R$-Lie algebras $f\colon M\to L$ is a morphism of $R$-modules such that 
\[
f([x,y])=[f(x),f(y)].
\]
This defines the category $\LieAlg$ of $R$-Lie algebras and $R$-Lie algebra morphisms.
\end{defi}

\begin{rmk}
There is an obvious forgetful functor $U\colon \LieAlg\to \Set$ and it has a left adjoint $F\colon\Set\to \LieAlg$: this functor builds the free $R$-Lie algebra on a given set $X$ with the following well-known procedure.
\begin{itemize}
 \item[i)] First of all we build the free magma on $X$, denoted $\Mag(X)$, writing $[-,-]\colon \Mag(X)\times \Mag(X)\to \Mag(X)$ for the binary operation: this means that an element of $\Mag(X)$ is given by a word with square brackets, as for instance \lq\lq$[[x_1,[x_2,x_3]],x_4]$\rq\rq.
 \item[ii)] Then we take the free $R$-module on it $R[\Mag(X)]$ and we extend the product by defining 
 \[
 \left[\sum\limits_{i=0}^nr_ix_i,\sum\limits_{j=0}^ms_jy_j\right]=\sum\limits_{i=0}^n\sum\limits_{j=0}^mr_is_j[x_i,y_j].
 \]
 This product gives to $R[\Mag(X)]$ the structure of a $R$-algebra.
 \item[iii)] Finally consider the ideal $I$ generated by the symbols
 \begin{itemize}
 \item[$\bullet$] $[x,x]$,
 \item[$\bullet$] $[x,y]+[y,x]$,
 \item[$\bullet$] $[x,[y,z]]+[y,[z,x]]+[z,[x,y]]$,
 \end{itemize}
 with $x,y,z\in X$ and define $F(X):=R[\Mag(X)]/I$.
\end{itemize}
\end{rmk}

\begin{rmk}
Let $M$ and $N$ be two $R$-Lie algebras. Their coproduct $M+N$ is the $R$-Lie algebra given by $F(U(M)\sqcup U(N))/J$ where $J$ is the ideal generated by the identities coming separately from $M$ and from $N$: this means that it is a quotient of the free algebra on the disjoint union of the underlying sets of the two algebras.
\end{rmk}

\begin{defi}
Given a word $s\in M+N$, we say that it is \emph{well nested} if it is a simple bracket---$[x_1,x_2]$ where $x_1$, $x_2\in M\cup N$---or if it is obtained by taking the bracket of an element with a well-nested word. Equivalently this means that $s$ does not contain a bracket between two brackets. The \emph{height} of a well nested word is simply the number of pair of brackets appearing in it. Given a word $s\in M+N$, any simple bracket $[x_1,x_2]$ is contained in a maximal well nested subword of $s$ and we say that the \emph{relative height} of $x_1$ and of $x_2$ in $s$ is the height of this subword. 
\end{defi}

Since we couldn't find a clear reference for the following lemma, we prove it here, even if we think it is a well-known result.
\begin{lemma}
\label{lemma:rewriting lemma}
Every element in $M+N$ can be written as a linear combination of elements of the form 
\begin{equation}
\label{eq:good form for lie words}
[x_k,[x_{k-1},[\ldots,[x_3,[x_2,x_1]]\cdots]]]
\end{equation}
with $x_i\in M$ or $x_i\in N$.
\end{lemma}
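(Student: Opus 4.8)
The plan is to exhibit an explicit spanning family and to show it is closed under the operations. Write $V\subseteq M+N$ for the $R$-submodule spanned by all words of the form~\eqref{eq:good form for lie words}, which I will call \emph{right-nested}, together with the single letters $x_1\in M\cup N$ (the degenerate case $k=1$). Since $M+N=F(U(M)\sqcup U(N))/J$ is generated as a Lie algebra by the images of the elements of $U(M)\sqcup U(N)$, and these generators already lie in $V$, it suffices to prove that $V$ is a Lie subalgebra, i.e.\ that $V$ is closed under the bracket: the statement then follows because the smallest $R$-submodule containing the generators and closed under the bracket (the span of all iterated brackets of generators) is all of $M+N$. By the $R$-bilinearity of the bracket, this closure reduces to showing that the bracket $[p,q]$ of two right-nested words again lies in $V$.

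First I would prove this last assertion by induction on the height of the left-hand factor $p$. If $p=a$ is a single letter, then $[a,q]$ is obtained simply by prepending $a$ to the defining sequence of $q$, hence is itself right-nested and lies in $V$. If instead $p=[a,p']$ with $a\in M\cup N$ a letter and $p'$ right-nested of strictly smaller height, then the Jacobi identity, valid in the quotient $M+N$, gives
\[
[p,q]=[[a,p'],q]=[a,[p',q]]-[p',[a,q]].
\]
In the first summand, the inductive hypothesis applied to the shorter left factor $p'$ expresses $[p',q]$ as a linear combination of right-nested words $r_i$, and then $[a,[p',q]]=\sum_i c_i[a,r_i]$ is again a linear combination of right-nested words by the base case. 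In the second summand, $[a,q]$ is right-nested by the base case, so the inductive hypothesis applied once more to $p'$ shows $[p',[a,q]]\in V$. Hence $[p,q]\in V$, which closes the induction and the argument.

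The step requiring care is the choice of the induction variable, and this is where the main subtlety lies. The Jacobi rewriting preserves the total number of letters, so one cannot close the argument by inducting on the degree: the terms $[a,[p',q]]$ and $[p',[a,q]]$ produced from a word of degree $n$ are again of degree $n$. The correct well-founded measure is instead the bracket-depth of the \emph{left} factor, and the key observation is that each application of the Jacobi identity replaces the left factor $[a,p']$ either by the strictly shallower left factor $p'$ or by a single letter, so the recursion terminates. The alternating and bilinearity axioms are used only to normalise letters and signs and present no real difficulty.
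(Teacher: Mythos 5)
Your proof is correct, but it is organized quite differently from the paper's. The paper gives a rewriting \emph{algorithm} acting on an arbitrary monomial: it locates an innermost well-nested subword $t$, applies the Jacobi identity in the form $[t,[w_1,w_2]]=[w_1,[t,w_2]]+[[t,w_1],w_2]$ to push $t$ outward, and argues termination by a two-case measure (either the relative height of $t$ increases or the bracket adjacent to $t$ loses complexity), finishing with the alternating property to normalise signs. You instead argue structurally: you let $V$ be the span of the right-nested words, reduce the lemma to showing that $V$ is a Lie subalgebra (since the letters generate $M+N$ and $M+N$ is spanned by iterated brackets of letters), and prove closure under the bracket by a clean single-variable induction on the height of the left factor, using Jacobi in the form $[[a,p'],q]=[a,[p',q]]-[p',[a,q]]$. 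Your approach buys a simpler and more rigorous termination argument --- the paper's measure argument is somewhat informal, whereas your induction is plainly well-founded --- and it avoids the auxiliary notions of well-nested subword and relative height altogether; it is essentially the standard proof that a generating set of a Lie algebra yields a spanning set of right-normed brackets. What the paper's algorithmic formulation buys in exchange is steerability: in Remark~\ref{rmk:choosing last element is possible} the same algorithm is rerun starting from a simple bracket containing a chosen letter $x$, so as to arrange $x_1=x$ in every word of the decomposition. Your argument can in fact deliver the same refinement --- in your induction the innermost letter of the right factor $q$ is preserved in every term produced --- but you would still need the preliminary Jacobi step of that remark to bring $x$ into an innermost bracket first, and you do not address this; since the lemma as stated does not require it, this is not a gap.
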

\begin{proof}
Consider a word $s$ which has $n$ pairs of brackets and apply the following algorithm:
\begin{itemize}
\item[1)] Choose a subword $t$ of $s$ which is well nested: this always exists, because we can take one of the innermost (and hence simple) brackets.
\item[2)] If $t=s$ go to $3)$. Otherwise $t$ is contained in a subword of the form
\[
[t,[w_1,w_2]]
\qquad\text{or}\qquad
[[w_1,w_2],t].
\]
with $w_1$ and $w_2$ subwords of $s$. Use the Jacobi identity to break $[t,[w_1,w_2]]$ into $[w_1,[t,w_2]]+[[t,w_1],w_2]$ (and similarly in the other case). Now $s$ can be seen as the sum of the two words in which we substituted $[t,[w_1,w_2]]$ with the two summands resulted from the application of the Jacobi identity. For each of these words repeat the step $2)$ choosing them as new $s$ and the maximal well nested word containing the old $t$ as new $t$.
\item[3)] Since $s$ is now well nested it suffices to apply the alternating property until all the brackets have a simple element on the left. This has only the effect of possibly changing the sign in front of the word. 
\end{itemize}
The reason why this algorithm works is simply because at each application of $2)$ we obtain one of the following:
\begin{itemize}
 \item[i)] the relative height of $t$ increases by at least $1$: this will eventually lead to the relative height reaching $n$, which means that the word in question is well nested;
 \item[ii)] the complexity of the bracket near $t$ decreases: in one application it goes from $[w_1,w_2]$ to both $w_1$ and $w_2$ which individually contains less brackets than $[w_1,w_2]$. This will eventually lead to $w_1$ or $w_2$ being a single element and hence to $i)$ at the next iteration.\qedhere
\end{itemize}
\end{proof}

\begin{rmk}
\label{rmk:choosing last element is possible}
Notice that for each word $s\in M+N$ and for each letter $x$ in it, we can decompose $s$ as a linear combination of words of the form (\ref{eq:good form for lie words}) in such a way that each word in the decomposition has $x_1=x$. This is possible because, by using the Jacobi identity, we can first decompose $s$ as a linear combination of words in which $x$ appears in a simple bracket. Then we can use the algorithm described in Lemma~\ref{lemma:rewriting lemma} choosing as starting $t$ the simple bracket containing $x$.
\end{rmk}

\begin{defi}
Let $P$ and $M$ be two $R$-Lie algebras. The object $P\flat M$ is defined in~\cite{BJK05,Jan03} as the kernel of the morphism 
\[
P+M\xrightarrow{\binom{1}{0}}P
\]
and it is the key ingredient for the definition of internal actions as we will see in the next section.
An element of this $R$-Lie algebra is an element of $P+M$ such that each of its monomials contains an element from $M$: indeed the arrow $\binom{1}{0}$ takes a linear combination of \lq\lq words\rq\rq\ and sends it to the linear combination of \lq\lq words\rq\rq\ obtained by substituting every element from $M$ with $0$ (therefore only monomials with an element in $M$ go to zero).

Notice that $(P\flat M, k_{P,M})=\Ker(\Coker(i_M\colon M\to P+M))$ and therefore $P\flat M$ is the ideal generated by $M$ in $P+M$.
\end{defi}

\begin{rmk}
Recall from~\cite{Jan03,BJK05} that for each object $P$, the functor $P\flat(-)$ is part of a monad structure. In particular $\eta^P\colon 1_{\LieAlg}\to P\flat(-)$ is given by 
\begin{align*}
 \eta^P_M\colon & M\to P\flat M\colon m\mapsto m
\end{align*}
and $\mu^P\colon P\flat(P\flat(-))\to P\flat(-)$ has components
\begin{align*}
 \mu^P_M\colon P\flat(P\flat M)&\to P\flat M
\end{align*}
which maps the two different brackets in $P\flat(P\flat M)$ to the one bracket in $P\flat M$.

Furthermore if $f\colon A \to B$ is a morphism, then $P\flat(f)=1_P\flat f\colon P\flat A\to P\flat B$ is given by sending each linear combination of words in $P\flat A$ into the one obtained by substituting every element $a\in A$ with its image $f(a)\in B$. 
\end{rmk}

\section{Actions and compatible actions of Lie algebras}
\label{section:Actions and compatible actions of Lie algebras}

We start by recalling the equivalent definitions of action and internal action in $\LieAlg$.

\begin{defi}
\label{defi:particular definition of actions in liealg}
Let $M$ and $P$ be $R$-Lie algebras. An \emph{action of $P$ on $M$} is given by a $R$-bilinear map $\psi\colon P\times M\to M$ with $(p,m) \mapsto {}^pm=\psi(p,m)$, such that for each $p,p'\in P$ and $m,m'\in M$ we have
\begin{itemize}
 \item ${}^{[p,p']}m={}^{p}({}^{p'}m)-{}^{p'}({}^{p}m)$ and
 \item ${}^{p}[m,m']=[{}^{p}m,m']+[m,{}^{p}m']$
\end{itemize}
\end{defi}

In~\cite{BJK05} Borceux, Janelidze and Kelly introduced the definition of \emph{internal action} in the context of semi-abelian categories and they proved that it is a generalization of the different particular definitions such as the one that we just stated for Lie algebras.

\begin{defi}
An \emph{internal action} $\xi\colon P\flat M\to M$ is an algebra for the monad $(P\flat -, \eta^P,\mu^P)$ for some $P$. This means that it is a morphism of $R$-Lie algebras such that the diagrams
\begin{align*}
\xymatrix{
M \ar[r]^-{\eta^P_M} \ar@{=}[rd] & P\flat M \ar[d]^-{\xi}\\
& M
}
&&
\xymatrix{
P\flat(P\flat M) \ar[r]^-{\mu^P_M} \ar[d]_-{1_P\flat\xi} & P\flat M \ar[d]^-{\xi}\\
P\flat M \ar[r]_-{\xi} & M
}
\end{align*}
commute. That is, such that
\begin{align*}
\xi(m)=m, && \xi(\mu^P_M(s))=\xi((1_P\flat\xi)(s))
\end{align*}
for all $m\in M$ and $s\in P\flat(P\flat M)$. For example if $s=\{p,[m,p']\}$, then $\mu^P_M(s)=[p,[m,p']]$ and $(1_P\flat\xi)(s)=[p,\xi([m,p'])]$, so we want that 
\[
\xi([p,[m,p']])=\xi([p,\xi([m,p'])]).
\]
This means that the image of the action on a complicated word can be obtained by taking the image of the most internal bracket and iterating this process until there are no brackets left. We will call this property \emph{decomposability}.
The actions just defined form a category, denoted by $\Act(\LieAlg)$.
\end{defi}

\begin{rmk}
It is easy to notice that there is an equivalence between actions and internal actions. In particular this correspondence sends an internal action $\xi\colon P\flat M\to M$ to the action $\psi\colon P\times M\to M$ defined via $\psi(p,m):=\xi([p,m])$, and conversely it sends an action $\psi\colon P\times M\to M$ to the internal action $\xi\colon P\flat M\to M$ defined via
\[
\begin{cases}
\xi(m):=m,\\
\xi([p,m]):=\psi(p,m).
\end{cases}
\]
The behavior of $\xi$ on more complex elements is uniquely determined by the hypothesis of decomposability.
From now one we are going to use actions or internal actions equivalently, depending on which is the more convenient approach in each specific case.
\end{rmk}

\begin{ex}
Given an $R$-Lie algebra $M$ we always have an action of $M$ on itself, that is the \textit{conjugation action} $M\times M\to M$ given by $(m,m')\mapsto [m,m']$. Viewed as an internal action, it is $\chi_M\colon M\flat M\to M$.
\end{ex}

\begin{defi}
\label{defi:definition of the coproduct action in LieAlg}
Consider an action $\xi^N_M\colon N\flat M\to M$ and the conjugation\linebreak $\chi_M\colon M\flat M\to M$. We can always construct an action $\xi^{M+N}_M\colon (M+N)\flat M\to M$ of the coproduct $M+N$ on $M$ such that it extends both $\xi^N_M$ and $\chi_M$. It is defined via
\begin{itemize}
 \item $\overline{m}\longmapsto \overline{m}$,
 \item $[m,\overline{m}]\longmapsto [m,\overline{m}]$,
 \item $[n,\overline{m}]\longmapsto \xi^N_M([n,\overline{m}])$.
\end{itemize}
where $\overline{m}\in M$ and $m,n\in M+N$. Notice that the images of those three types of elements are fixed by the fact that $\xi^{M+N}_M$ is an action and by the fact that it extends both the conjugation of $M$ and the action $\xi^N_M$. Furthermore it is uniquely determined by these requirements since we can easily deduce the behavior of $\xi^{M+N}_M$ on more complex elements by using the Jacobi identity and the decomposability of the action $\xi^{M+N}_M$. For example we can show that
\[
\xi^{M+N}_M([[n,m],\overline{m}])=[\xi^N_M([n,m]),\overline{m}]
\]
by the following chain of equalities
\begin{align*}
\xi^{M+N}_M([[n,m],\overline{m}])&=\xi^{M+N}_M(-[[m,\overline{m}],n]-[[\overline{m},n],m])\\
&=\xi^{M+N}_M([n,[m,\overline{m}]]-[m,[n,\overline{m}]])\\
&=\xi^{M+N}_M([n,[m,\overline{m}]])-\xi^{M+N}_M([m,[n,\overline{m}]])\\
&=\xi^{M+N}_M([n,\xi^{M+N}_M([m,\overline{m}])])-\xi^{M+N}_M([m,\xi^{M+N}_M([n,\overline{m}])])\\
&=\xi^N_M([n,\chi_M([m,\overline{m}])])-\chi_M([m,\xi^N_M([n,\overline{m}])])\\
&=\xi^N_M([n,[m,\overline{m}]])-[m,\xi^N_M([n,\overline{m}])]\\
&=[\xi^N_M([n,m]),\overline{m}]+[m,\xi^N_M([n,\overline{m}])]-[m,\xi^N_M([n,\overline{m}])]\\
&=[\xi^N_M([n,m]),\overline{m}]
\end{align*}
\end{defi}

\begin{defi}
\label{defi:compatible external actions in LieAlg}
Given two $R$-Lie algebras $M$ and $N$, we say that two actions
\begin{align*}
\psi^M_N\colon M\times N\to N && \psi^N_M\colon N\times M\to M
\end{align*}
are \emph{compatible} (see~\cite{Ell91}) if the following equations hold
\begin{equation}
\label{eq:explicit compatibility condition for LieAlg}
\begin{cases}
{}^{({}^{n}m)}n'=\left[n',{}^{m}n\right],\\ {}^{({}^{m}n)}m'=\left[m',{}^{n}m\right].
\end{cases}
\end{equation}
\end{defi}

\begin{rmk}
\label{rmk:explanation of the definition of compatible actions in LieAlg}
The link between this definition and the compatibility condition in the case of groups is given by the following general idea: \emph{the element ${}^{m}n$ (resp. ${}^{n}m$) has to act as the formal conjugation of $m$ and $n$ in the coproduct would do}. In particular in $\Grp$ this amounts to require the equalities
\begin{equation}
\label{eq:compatibility conditions in grp}
\begin{cases}
{}^{\left({}^{n}m\right)}n'={}^{\left(nmn^{-1}\right)}n',\\
{}^{\left({}^{m}n\right)}m'={}^{\left(mnm^{-1}\right)}m',
\end{cases}
\end{equation}
(see~\cite{BL84} for further details) whose internal translation is given by
\begin{align*}
\begin{cases}
\xi^M_N\left(\xi^N_M(x)n'{\xi^N_M(x)}^{-1}\right)=\xi^{M+N}_N\left(xn'x^{-1}\right),\\
\xi^N_M\left(\xi^M_N(y)m'{\xi^M_N(y)}^{-1}\right)=\xi^{M+N}_M\left(ym'y^{-1}\right),
\end{cases}
\end{align*}
with $x=nmn^{-1}$ and $y=mnm^{-1}$. Notice that these can also be seen as the commutativity of the diagrams
\begin{align}
\label{diag:diagrammatic version of compatibility conditions}
\xymatrixcolsep{3pc}
\vcenter{\xymatrix{
(N\flat M)\flat N \ar[r]^-{k_{N,M}\flat 1_N} \ar[d]_-{\xi^N_M\flat 1_N} & (M+N)\flat N \ar[d]^-{\xi^{M+N}_N}\\
M\flat N \ar[r]_-{\xi^M_N} & N
}}
&&
\xymatrixcolsep{3pc}
\vcenter{\xymatrix{
(M\flat N)\flat M \ar[r]^-{k_{M,N}\flat 1_M} \ar[d]_-{\xi^M_N\flat 1_M} & (M+N)\flat M \ar[d]^-{\xi^{M+N}_M}\\
N\flat M \ar[r]_-{\xi^N_M} & M
}}
\end{align}
Besides (\ref{eq:compatibility conditions in grp}), we should also require the equalities
\[
\begin{cases}
{}^{\left({}^{n}m\right)}m'={}^{\left(nmn^{-1}\right)}m',\\
{}^{\left({}^{m}n\right)}n'={}^{\left(mnm^{-1}\right)}n',
\end{cases}
\]
or their internal version
\[
\begin{cases}
\chi_M\left(\xi^N_M(x)m'{\xi^N_M(x)}^{-1}\right)=\xi^{M+N}_M\left(xn'x^{-1}\right),\\
\chi_N\left(\xi^M_N(y)n'{\xi^M_N(y)}^{-1}\right)=\xi^{M+N}_N\left(yn'y^{-1}\right),
\end{cases}
\]
coming from the commutativity of the diagrams
\begin{align*}
\xymatrixcolsep{3pc}
\xymatrix{
(N\flat M)\flat M \ar[r]^-{k_{N,M}\flat 1_M} \ar[d]_-{\xi^N_M\flat 1_M} & (M+N)\flat M \ar[d]^-{\xi^{M+N}_M}\\
M\flat M \ar[r]_-{\chi_M} & M
}
&&
\xymatrixcolsep{3pc}
\xymatrix{
(M\flat N)\flat N \ar[r]^-{k_{M,N}\flat 1_N} \ar[d]_-{\xi^M_N\flat 1_N} & (M+N)\flat N \ar[d]^-{\xi^{M+N}_N}\\
N\flat N \ar[r]_-{\chi_N} & N
}
\end{align*}
However, as one can easily check, these always hold for every pair of actions. 

The same idea applied in $\LieAlg$ leads to the equations
\[
\begin{cases}
{}^{\left({}^{n}m\right)}n'={}^{\left[n,m\right]}n',\\
{}^{\left({}^{m}n\right)}m'={}^{\left[m,n\right]}m',
\end{cases}
\]
whose internal version is given by the system
\[
\begin{cases}
\xi^M_N\left(\left[\xi^N_M\left(\left[n,m\right]\right),n'\right]\right)=\xi^{M+N}_N\left(\left[\left[n,m\right],n'\right]\right),\\
\xi^N_M\left(\left[\xi^M_N\left(\left[m,n\right]\right),m'\right]\right)=\xi^{M+N}_M\left(\left[\left[m,n\right],m'\right]\right),
\end{cases}
\]
or again by the commutativity of (\ref{diag:diagrammatic version of compatibility conditions}).
By using the decomposability of the coproduct actions one can show that these requirements are the same as (\ref{eq:explicit compatibility condition for LieAlg}) in Definition~\ref{defi:compatible external actions in LieAlg}: indeed we have the chains of equalities
\begin{align*}
{}^{\left[n,m\right]}n'&=\xi^{M+N}_N\left(\left[\left[n,m\right],n'\right]\right)=\left[\xi^M_N(\left[n,m\right]),n'\right]=\left[n',\xi^M_N(\left[m,n\right])\right]=\left[n',{}^{m}n\right],\\
{}^{\left[m,n\right]}m'&=\xi^{M+N}_M\left(\left[\left[m,n\right],m'\right]\right)=\left[\xi^N_M(\left[m,n\right]),m'\right]=\left[m',\xi^N_M(\left[n,m\right])\right]=\left[m',{}^{n}m\right].
\end{align*}
Furthermore, in the case of $\LieAlg$ the other two equations 
\[
\begin{cases}
{}^{\left({}^{n}m\right)}m'={}^{\left[n,m\right]}m',\\
{}^{\left({}^{m}n\right)}n'={}^{\left[m,n\right]}n',
\end{cases}
\]
are automatically satisfied: indeed by looking at their internal version
\[
\begin{cases}
\left[\xi^N_M\left(\left[n,m\right]\right),m'\right]=\xi^{M+N}_M\left(\left[\left[n,m\right],m'\right]\right),\\
\left[\xi^M_N\left(\left[m,n\right]\right),n'\right]=\xi^{M+N}_N\left(\left[\left[m,n\right],n'\right]\right),
\end{cases}
\]
one can see that they are precisely a consequence of the decomposability of the coproduct actions shown in Definition~\ref{defi:definition of the coproduct action in LieAlg}.
\end{rmk}

\begin{defi}
\label{defi:definition of crossed module of LieAlg}
A \emph{crossed module of $R$-Lie algebras} is given by $(M,P,\partial,\psi)$ where $M$ and $P$ are $R$-Lie algebras, $\partial\colon M\to P$ is a morphism between them, and \linebreak $\psi\colon P\times M\to M$ is an action such that the diagram
\begin{align*}
\xymatrix{
M\times M \ar[r]^-{\chi_M} \ar[d]_-{\partial\times 1_M} & M \ar@{=}[d]\\
P\times M \ar[r]^-{\psi} \ar[d]_-{1_P\times \partial} & M \ar[d]^-{\partial}\\
P\times P \ar[r]_-{\chi_P} & P\\
}
\end{align*}
commutes. That is, such that $[m,m']={}^{\partial(m)}m'$ and $\partial({}^{p}m)=[p,\partial(m)]$.
\end{defi}

Again by using the equivalence between the actions and the internal actions we can find the equivalent definition of internal crossed modules, first appeared in~\cite{Jan03}: this is actually a simplified version due to the fact that in $\LieAlg$ the \lq\lq Smith-is-Huq\rq\rq\ condition holds (see~\cite{MFVdL12} for further details).

\begin{defi}
\label{defi:definition of internal crossed module of LieAlg}
An \emph{internal crossed module of $R$-Lie algebras} is given by $(M,P,\partial,\xi)$ where $M$ and $P$ are $R$-Lie algebras, $\partial\colon M\to P$ is a morphism between them, and $\xi\colon P\flat M\to M$ is an internal action such that the following diagram commutes
\begin{align*}
\xymatrix{
M\flat M \ar[r]^-{\chi_M} \ar[d]_-{\partial\flat 1_M} & M \ar@{=}[d]\\
P\flat M \ar[r]^-{\xi} \ar[d]_-{1_P\flat \partial} & M \ar[d]^-{\partial}\\
P\flat P \ar[r]_-{\chi_P} & P\\
}
\end{align*}
\end{defi}

\begin{prop}
\label{prop:two coterminal crossed modules induce compatible actions in LieAlg}
Let $M$ and $N$ be $R$-Lie algebras. Consider two crossed module structures $(M\xrightarrow{\mu}P,\psi_M)$ and $(N\xrightarrow{\nu}P,\psi_N)$
\[
\xymatrix{
& M\ar[d]^-{\mu}\\
N \ar[r]_-{\nu} & P
}
\]
and construct two induced actions $\psi^M_N$ and $\psi^N_M$ as follows:
\begin{align*}
\xymatrix{
M\times N \ar[rd]_-{\mu\times 1_N} \ar[rr]^-{\psi^M_N} && N\\
& P\times N \ar[ru]_-{\psi_M}
}
&&
\xymatrix{
N\times M \ar[rd]_-{\nu\times 1_M} \ar[rr]^-{\psi^N_M} && M\\
& P\times M \ar[ru]_-{\psi_N}
}
\end{align*}
These two actions are compatible.
\end{prop}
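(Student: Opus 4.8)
The plan is to verify directly the two equations~(\ref{eq:explicit compatibility condition for LieAlg}) of Definition~\ref{defi:compatible external actions in LieAlg} for the induced actions, which by construction are given by ${}^{m}n={}^{\mu(m)}n$ and ${}^{n}m={}^{\nu(n)}m$ (where on the right-hand sides $P$ acts through $\psi_N$ and $\psi_M$ respectively). The only ingredients I expect to need are the two defining properties of each crossed module from Definition~\ref{defi:definition of crossed module of LieAlg}---the Peiffer identity ${}^{\partial(x)}x'=[x,x']$ and the equivariance $\partial({}^{p}x)=[p,\partial(x)]$---together with the two axioms of an action from Definition~\ref{defi:particular definition of actions in liealg}.

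For the first equation I would start from the left-hand side ${}^{({}^{n}m)}n'$. Since ${}^{n}m={}^{\nu(n)}m$ lies in $M$, the outer action is $M$ acting on $N$, so ${}^{({}^{n}m)}n'={}^{\mu({}^{\nu(n)}m)}n'$; applying the equivariance of $(M\xrightarrow{\mu}P,\psi_M)$ turns the exponent into the bracket $[\nu(n),\mu(m)]$, giving ${}^{[\nu(n),\mu(m)]}n'$. Now I would expand this using the first action axiom ${}^{[p,q]}n'={}^{p}({}^{q}n')-{}^{q}({}^{p}n')$, with $p=\nu(n)$ and $q=\mu(m)$, and then use the Peiffer identity of $(N\xrightarrow{\nu}P,\psi_N)$, namely ${}^{\nu(n)}(-)=[n,-]$, to rewrite every action by $\nu(n)$ as a Lie bracket. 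Combined with the second action axiom applied to ${}^{\mu(m)}[n,n']$, this produces two copies of $[n,{}^{\mu(m)}n']$ of opposite sign; after they cancel one is left with $-[{}^{\mu(m)}n,n']=[n',{}^{m}n]$, which is exactly the right-hand side.

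The second equation is the mirror image of the first, obtained by interchanging the roles of $M$ and $N$ (and of $\mu$ and $\nu$), so I would simply remark that the same computation applies verbatim after this swap; here one uses instead the Peiffer identity of $(M\xrightarrow{\mu}P,\psi_M)$ and the equivariance of $(N\xrightarrow{\nu}P,\psi_N)$. The argument is essentially routine, and the only real point to watch is the bookkeeping: at each step one must keep track of which of the two crossed-module axioms and which of the two actions is being invoked, and the signs coming from the alternating property must be handled carefully so that the telescoping cancellation goes through. I do not anticipate any genuine obstacle, since the statement is a formal consequence of the crossed-module axioms and the whole content is concentrated in the single cancellation described above.
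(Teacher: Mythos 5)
Your proposal is correct and follows essentially the same route as the paper: unfold the induced actions to rewrite ${}^{({}^{n}m)}n'$ as ${}^{\mu({}^{\nu(n)}m)}n'$, apply the equivariance of $\mu$ to reach ${}^{[\nu(n),\mu(m)]}n'$, and then reduce to $[n',{}^{m}n]$ using the crossed-module structure of $\nu$, with the second equation following by symmetry. The only difference is cosmetic and lies in the final reduction: the paper flips the bracket, recognizes $[\mu(m),\nu(n)]$ as $\nu({}^{\mu(m)}n)$ via the equivariance of $\nu$, and applies the Peiffer identity once, whereas you expand ${}^{[\nu(n),\mu(m)]}n'$ with the two action axioms and let the terms $[n,{}^{\mu(m)}n']$ cancel --- both computations are valid and of comparable length.
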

\begin{proof}
We need to prove the equation ${}^{({}^{n}m)}n'=[n',{}^{m}n]$ by using the crossed module conditions $[m,m']={}^{\mu(m)}m'$ and $\mu({}^{p}m)=[p,\mu(m)]$, and $[n,n']={}^{\nu(n)}n'$ and $\nu({}^{p}n)=[p,\nu(n)]$. We have the chain of equalities
\begin{align*}
{}^{({}^{n}m)}n'&={}^{({}^{\nu(n)}m)}n'={}^{\mu({}^{\nu(n)}m)}n'={}^{[\nu(n),\mu(m)]}n'\\
&={}^{-[\mu(m),\nu(n)]}n'={}^{-\nu({}^{\mu(m)}n)}n'={}^{\nu(-{}^{\mu(m)}n)}n'\\
&=[-{}^{\mu(m)}n,n']=[n',{}^{\mu(m)}n]=[n',{}^{m}n].
\end{align*}
For the second equation, the reasoning is the same.
\end{proof}

Imitating what has been done in the case of groups (\cite{Whi41,GH87}), we are able to define the Peiffer product of two Lie algebras acting on each other (this was firstly defined in~\cite{Khm99}).
\begin{defi}
\label{defi:Peiffer product of Lie algebras}
Given two Lie algebras $M$ and $N$ acting on each other, consider their coproduct $M+N$ and its ideal $K$, generated by the elements 
\[
\left({}^{n}m\right)-\left[n,m\right]
\qquad\text{and}\qquad
\left({}^{m}n\right)-\left[m,n\right],
\]
for $m\in M$ and $n\in N$. We define the \emph{Peiffer product $M\bowtie N$ of $M$ and $N$} as the quotient
\begin{align*}
\xymatrix{
K \ar@{>->}[r] & M+N \ar@{>>}[r]^-{q_K} & \frac{M+N}{K} =:M\bowtie N.
}
\end{align*}
\end{defi}

\begin{rmk}
Notice that an equivalent way of defining the Peiffer product is the following coequalizer
\begin{align*}
\xymatrixcolsep{4pc}
\xymatrix{
(N\flat M)+(M\flat N) \ar@<2pt>[r]^-{\binom{k_{N,M}}{k_{M,N}}} \ar@<-2pt>[r]_-{\xi^N_M+\xi^M_N} & M+N \ar@{>>}[r]^-{q} & M\bowtie N
}
\end{align*}
In order to show that this definition is equivalent to the previous one, consider the morphism $q_K$ given by the first definition. It is easy to see that 
\begin{align*}
\begin{cases}
q_K\circ i_M\circ \xi^N_M = q_K\circ k_{N,M}\\
q_K\circ i_N\circ \xi^M_N = q_K\circ k_{M,N}
\end{cases}
\end{align*}
since this is exactly what taking the quotient by $K$ means. But this is the same as saying that
\begin{align*}
\begin{cases}
q_K\circ (\xi^N_M+\xi^M_N) \circ i_{N\flat M} = q_K\circ k_{N,M}\\
q_K\circ (\xi^N_M+\xi^M_N) \circ i_{M\flat N} = q_K\circ k_{M,N}
\end{cases}
\end{align*}
which in turn is 
\begin{align*}
q_K\circ (\xi^N_M+\xi^M_N)=q_K\circ \binom{k_{N,M}}{k_{M,N}}.
\end{align*}
The universal property of the coequalizer is given by the universal property of the quotient by $K$ in a straightforward way.
\end{rmk}

Since $K$ acts trivially on both $M$ and $N$ we can define induced actions $\xi^{M\bowtie N}_M$ and $\xi^{M\bowtie N}_N$ of $M \bowtie N$ on $M$ and $N$, that is such that the following diagrams commute
\begin{align}
\label{diag:diagram involving action of coproduct and action of Peiffer product in LieAlg}
\vcenter{\xymatrix{
(M+N)\flat M \ar[rd]_-{\xi^{M+N}_M} \ar[r]^-{q\flat 1_M} & (M\bowtie N)\flat M \ar[d]^-{\xi^{M\bowtie N}_M}\\
& M
}}
&&
\vcenter{\xymatrix{
(M+N)\flat N \ar[rd]_-{\xi^{M+N}_N} \ar[r]^-{q\flat 1_N} & (M\bowtie N)\flat N \ar[d]^-{\xi^{M\bowtie N}_N}\\
& M
}}
\end{align}
We can describe these actions of the Peiffer product through its universal property, but in order to do this, we need a preliminary lemma and a remark.

\begin{lemma}
\label{lemma:-bX preserves coequalizers of reflexive graphs}
Let $X$ be an object in a semi-abelian category $\A$. Then the functor $-\flat X\colon\A\to\A$ preserves coequalizers of reflexive graphs.
\end{lemma}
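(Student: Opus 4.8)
The plan is to build everything on the natural split short exact sequence
\[
0 \to (-)\flat X \xrightarrow{\;k\;} (-)+X \xrightarrow{\binom{1}{0}} (-) \to 0,
\]
which for every object $P$ exhibits $P\flat X$ as the kernel of the split epimorphism $\binom{1}{0}\colon P+X\to P$, split naturally by the coproduct inclusion $i_P$. The two outer functors are harmless for our purposes: the identity preserves every colimit, and $-+X$ preserves all colimits — in particular reflexive coequalizers — since a coproduct is itself a colimit and colimits commute with colimits. Writing $G\colon\A\to\Pt(\A)$ for the functor $P\mapsto(\binom{1}{0}\colon P+X\to P,\,i_P)$, one has $-\flat X=\Ker\circ G$, and because colimits in $\Pt(\A)$ are computed componentwise from the domain and codomain, $G$ preserves reflexive coequalizers. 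Thus the whole statement reduces to showing that the kernel functor $\Ker\colon\Pt(\A)\to\A$ preserves reflexive coequalizers.

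Concretely, I would fix a reflexive graph $(d,c,e)$ on $R\rightrightarrows B$ with reflexive coequalizer $q\colon B\to Q$, let $\kappa\colon B\flat X\to C$ be the coequalizer of the pair $d\flat 1,\,c\flat 1\colon R\flat X\rightrightarrows B\flat X$, and let $\phi\colon C\to Q\flat X$ be the canonical comparison (which exists because $q\flat 1$ coequalizes the pair). The goal is to prove $\phi$ is an isomorphism. The first half is formal: applying the three functors to $q$ yields a morphism of split short exact sequences whose middle component $q+1$ and base component $q$ are regular epimorphisms (both $-+X$ and the identity preserve regular epis, being coequalizer-preserving). The standard semi-abelian fact that the kernel functor carries such a comparison to a regular epimorphism then gives that $q\flat 1\colon B\flat X\to Q\flat X$ is a regular epi; since $q\flat 1=\phi\circ\kappa$ with $\kappa$ a regular epi, $\phi$ is a regular epi as well.

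It remains to prove that $\phi$ is a monomorphism, and this is where the real work lies. As every regular epimorphism in a semi-abelian category is the cokernel of its own kernel, it suffices to show $\Ker\phi=0$, for then $\phi$ is invertible. The subtlety is that $\Ker\phi$ is a regular-epi quotient of $\Ker(q\flat 1)$ — the latter being in general nonzero — so one must check that precisely the part of $\Ker(q\flat 1)$ surviving into $C$ vanishes; equivalently, that $q\flat 1$ is the \emph{effective} coequalizer of the pair. Here I would invoke the full exactness of $\A$. Using that $\A$ is Mal'cev and Barr-exact, I replace $R\rightrightarrows B$ by the effective equivalence relation $\overline R\rightrightarrows B$ obtained as the regular image of $\langle d,c\rangle\colon R\to B\times B$; this changes neither $q$ nor the coequalizer in the $\flat$-column, because $R\twoheadrightarrow\overline R$ is a regular epi of points over $B$ and hence $\rho\flat 1$ is again a regular epi. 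One is thereby reduced to a kernel pair $\overline R=B\times_Q B$, where the remaining task is to see that the canonical map $(B\times_Q B)\flat X\to (B\flat X)\times_{Q\flat X}(B\flat X)$ is a regular epimorphism — an instance of the $3\times 3$ lemma in semi-abelian categories — which identifies $q\flat 1$ with the coequalizer of $\overline R\flat X\rightrightarrows B\flat X$ and forces $\phi$ to be an isomorphism.

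I expect this last monomorphism step to be the main obstacle. The regular-epi half of the argument is formal and uses only that $-+X$ preserves regular epimorphisms, but controlling the kernel of the comparison — equivalently, proving that the kernel functor is compatible with reflexive coequalizers and not merely with regular epimorphisms — genuinely requires Barr-exactness and protomodularity and cannot be extracted from the pointwise-split short exact sequence by formal manipulation alone. For the intended application to $\A=\LieAlg$ one could alternatively verify the isomorphism by a direct computation using the normal form of Lemma~\ref{lemma:rewriting lemma}, but the structural argument above has the merit of working verbatim in every semi-abelian category.
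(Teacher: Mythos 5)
The first thing to say is that the paper does not actually prove this lemma: immediately after the statement it declares that a proof, ``based on a proposition in~\cite{HVdL11v1}'', is ``straightforward but a bit involved'' and defers it to the paper in preparation~\cite{dMVdL19}. So there is no in-paper argument to compare yours against, and your proposal must be judged on its own terms. Judged that way, its architecture is sound. The decomposition $-\flat X=\Ker\circ G$, where $G\colon\A\to\Pt(\A)$ sends $P$ to the point $\bigl(\binom{1}{0}\colon P+X\to P,\,i_P\bigr)$, is legitimate ($\Pt(\A)$ is a functor category on the ``walking split epimorphism'', so its colimits are computed componentwise); the regular-epi half, using that a morphism of split short exact sequences with regular epi middle and base components induces a regular epi on kernels, is a correct and standard semi-abelian fact; and your reduction of the injectivity half to Barr-exactness---replacing the reflexive graph by the equivalence relation it generates, which by the Mal'tsev property plus exactness is the kernel pair $B\times_Q B$ of $q$, and then comparing $(B\times_Q B)\flat X$ with the kernel pair of $q\flat 1$---is exactly where the content of the lemma lies.

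Two points need repair before this is a complete proof. First, ``$-+X$ preserves all colimits'' is false: $0+X=X\neq 0$, so it does not even preserve the initial object. What is true, and all you need, is that $-+X$ preserves \emph{connected} colimits (the colimit of a constant diagram at $X$ over a connected index category is $X$), and reflexive coequalizers are connected. Second, your final claim---that the comparison $\theta\colon(B\times_Q B)\flat X\to(B\flat X)\times_{Q\flat X}(B\flat X)$ is a regular epimorphism---is not an instance of the $3\times 3$ lemma, and as written it is the one genuinely unjustified step. It does hold, and can be proved with tools you have already set up: apply $-+X$ to the kernel-pair square of $q$; since $-+X$ preserves reflexive coequalizers, $q+1_X$ is the coequalizer of the reflexive graph $(B\times_Q B)+X\rightrightarrows B+X$, so your own image argument (Mal'tsev plus exactness) shows that the regular image of this graph in $(B+X)\times(B+X)$ is precisely the kernel pair of $q+1_X$; in other words the canonical map $c\colon(B\times_Q B)+X\to(B+X)\times_{Q+X}(B+X)$ is a regular epimorphism. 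Since kernels commute with pullbacks, $\theta$ is obtained by pulling back $c$ along the kernel of $(B+X)\times_{Q+X}(B+X)\to B\times_Q B$, hence is a regular epimorphism by pullback-stability. (Equivalently, one may cite here the exact-Mal'tsev theorem that a pushout square of regular epimorphisms is a regular pushout.) With these two fixes your argument is complete, and it has the merit, which the paper's deferred proof shares only implicitly, of being self-contained at the level of standard semi-abelian facts.
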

A proof of this result, based on a proposition in~\cite{HVdL11v1}, is straightforward but a bit involved, and can be found in the paper in preparation \cite{dMVdL19}.

\begin{rmk}
\label{rmk:the maps coequalized by the Peiffer product form a reflexive graph in LieAlg}
Notice that the two compositions
\begin{align*}
\xymatrixcolsep{6pc}
\xymatrix{
M+N \ar[r]^-{\eta^N_M+\eta^M_N} & (N\flat M)+(M\flat N) \ar@<2pt>[r]^-{\binom{k_{N,M}}{k_{M,N}}} \ar@<-2pt>[r]_-{\xi^N_M+\xi^M_N} & M+N
}
\end{align*}
are given by $1_{M+N}$. Hence we have that
\begin{align*}
\xymatrixcolsep{8pc}
\xymatrix{
(N\flat M)+(M\flat N) \ar@/^1.5pc/[r]^-{\binom{k_{N,M}}{k_{M,N}}} \ar@/_1.5pc/[r]_-{\xi^N_M+\xi^M_N} & M+N \ar[l]|-{\eta^N_M+\eta^M_N}
}
\end{align*}
is a reflexive graph.
\end{rmk}

Lemma~\ref{lemma:-bX preserves coequalizers of reflexive graphs} implies that $q\flat 1_M$ is the coequalizer of $\binom{k_{N,M}}{k_{M,N}}\flat 1_M$ and $(\xi^N_M+\xi^M_N)\flat 1_M$ and that $q\flat 1_N$ is the coequalizer of $\binom{k_{N,M}}{k_{M,N}}\flat 1_N$ and $(\xi^N_M+\xi^M_N)\flat 1_N$. We want to use these universal properties to define induced actions $\xi^{M\bowtie N}_M$ and $\xi^{M\bowtie N}_N$ of $M \bowtie N$ on $M$ and $N$ as in the next two diagrams
\begin{align*}
\xymatrixcolsep{3.5pc}
\xymatrix{
((N\flat M)+(M\flat N))\flat M \ar@<2pt>[r]^-{\binom{k_{N,M}}{k_{M,N}}\flat 1_M} \ar@<-2pt>[r]_-{\left(\xi^N_M+\xi^M_N\right)\flat 1_M} & (M+N)\flat M \ar[rd]_-{\xi^{M+N}_M} \ar[r]^-{q\flat 1_M} & (M\bowtie N)\flat M \ar@{.>}[d]^-{\xi^{M\bowtie N}_M}\\
& & M
}
\\
\xymatrixcolsep{3.5pc}
\xymatrix{
((N\flat M)+(M\flat N))\flat N \ar@<2pt>[r]^-{\binom{k_{N,M}}{k_{M,N}}\flat 1_N} \ar@<-2pt>[r]_-{\left(\xi^N_M+\xi^M_N\right)\flat 1_N} & (M+N)\flat N \ar[rd]_-{\xi^{M+N}_N} \ar[r]^-{q\flat 1_N} & (M\bowtie N)\flat N \ar@{.>}[d]^-{\xi^{M\bowtie N}_N}\\
& & N
}
\end{align*}
In order to do so, we need the following result.
\begin{prop}
\label{prop:the coproduct action coequalizes the two maps in LieAlg}
The action $\xi^{M+N}_M$ coequalizes $\binom{k_{N,M}}{k_{M,N}}\flat 1_M$ and $(\xi^N_M+\xi^M_N)\flat 1_M$. Similarly, the action $\xi^{M+N}_N$ coequalizes $\binom{k_{N,M}}{k_{M,N}}\flat 1_N$ and $(\xi^N_M+\xi^M_N)\flat 1_N$. 
\end{prop}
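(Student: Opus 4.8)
The plan is to check that the two composites agree as morphisms of Lie algebras $((N\flat M)+(M\flat N))\flat M\to M$, and for this it suffices to evaluate them on a spanning set. Writing $A:=(N\flat M)+(M\flat N)$, $\phi_1:=\binom{k_{N,M}}{k_{M,N}}$ and $\phi_2:=\xi^N_M+\xi^M_N$, both maps in question are $\phi_i\flat 1_M$ followed by $\xi^{M+N}_M$. By Lemma~\ref{lemma:rewriting lemma} together with Remark~\ref{rmk:choosing last element is possible}, every element of $A\flat M$ is an $R$-linear combination of \lq\lq good form\rq\rq\ words $s=[y_k,[y_{k-1},[\ldots,[y_2,m]\ldots]]]$ whose innermost letter $m$ lies in $M$ and whose remaining letters $y_i$ lie in $A\cup M$; the choice of an innermost $m\in M$ is possible precisely because each monomial of $A\flat M$ must contain a letter from $M$. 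By linearity it is then enough to verify $\xi^{M+N}_M((\phi_1\flat 1_M)(s))=\xi^{M+N}_M((\phi_2\flat 1_M)(s))$ on such words.

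The first key fact I would record is that $\phi_1$ and $\phi_2$ agree modulo $K$: since $M\bowtie N$ is the coequalizer of $\phi_1$ and $\phi_2$, we have $q\circ\phi_1=q\circ\phi_2$, whence $\phi_1(a)-\phi_2(a)\in\Ker q=K$ for every $a\in A$. The second key fact is that $K$ acts trivially on $M$, i.e. $\xi^{M+N}_M([k,m'])=0$ for all $k\in K$ and $m'\in M$; this follows by a short induction on the bracket-length of ideal elements, using the computation of Definition~\ref{defi:definition of the coproduct action in LieAlg}, which gives ${}^{({}^nm'-[n,m'])}m''=[{}^nm',m'']-[{}^nm',m'']=0$ on the generators of $K$, together with the Lie action identity ${}^{[z,x]}m''={}^z({}^xm'')-{}^x({}^zm'')$.

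With these in hand I would run an induction on the height $k$ of the good-form word $s$. Abbreviating $\Phi_i:=\phi_i\flat 1_M$ and the coproduct action by ${}^{(-)}(-)$, the point is that $\Phi_i$ is the restriction to the ideal $A\flat M$ of the algebra map $\phi_i+1_M$ on $A+M$ (with the convention $\phi_i|_M=1_M$), so $\Phi_i([y_k,w])=[\phi_i(y_k),\Phi_i(w)]$ for the tail $w=[y_{k-1},[\ldots,[y_2,m]\ldots]]$. Because $w$ still contains the letter $m\in M$, the element $\Phi_i(w)$ lies in $(M+N)\flat M$, and decomposability of $\xi^{M+N}_M$ yields
\[
\xi^{M+N}_M(\Phi_i(s))=\xi^{M+N}_M\bigl([\phi_i(y_k),\Phi_i(w)]\bigr)={}^{\phi_i(y_k)}\bigl(\xi^{M+N}_M(\Phi_i(w))\bigr).
\]
By the inductive hypothesis $\xi^{M+N}_M(\Phi_1(w))=\xi^{M+N}_M(\Phi_2(w))=:v\in M$, so the difference of the two sides for $s$ equals ${}^{\phi_1(y_k)}v-{}^{\phi_2(y_k)}v={}^{(\phi_1(y_k)-\phi_2(y_k))}v$, which vanishes by the two key facts since $\phi_1(y_k)-\phi_2(y_k)\in K$ (it is $0$ when $y_k\in M$). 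The base case $s=m$ is immediate from $\xi^{M+N}_M(m)=m$. The argument for $\xi^{M+N}_N$ is identical after exchanging the roles of $M$ and $N$.

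I expect the main obstacle to be the bookkeeping needed to legitimately reduce to good-form words with the innermost letter in $M$: this normalization (Remark~\ref{rmk:choosing last element is possible}) is exactly what guarantees that every tail $w$ appearing in the peeling still meets $M$, so that $\xi^{M+N}_M$ is actually defined on each $\Phi_i(w)$ and decomposability can be applied at every step. Without placing an $M$-letter innermost, the recursion could reach a bracket lying outside $(M+N)\flat M$, where $\xi^{M+N}_M$ is not defined.
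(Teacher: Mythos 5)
Your reduction to good-form words with an $M$-letter innermost, the peeling by decomposability, and your Fact 1 are all fine, but there is a genuine gap in your Fact 2, and it sits exactly where the proposition lives or dies. The ideal $K$ has \emph{two} families of generators, ${}^{n}m-[n,m]$ and ${}^{m}n-[m,n]$, and your verification only treats the first one. For the first family the action on $M$ is indeed zero by the computation you quote, which uses decomposability alone. For the second family, however, using $\xi^{M+N}_M\left([[n,m],m'']\right)=[{}^{n}m,m'']$ one finds
\begin{align*}
{}^{\left({}^{m}n-[m,n]\right)}m''={}^{\left({}^{m}n\right)}m''+[{}^{n}m,m'']={}^{\left({}^{m}n\right)}m''-[m'',{}^{n}m],
\end{align*}
which vanishes precisely when ${}^{\left({}^{m}n\right)}m''=[m'',{}^{n}m]$, i.e.\ precisely when the second compatibility condition of Definition~\ref{defi:compatible external actions in LieAlg} holds. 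Since your argument never invokes compatibility of the two actions, it cannot be complete: the statement itself is false for arbitrary mutual actions. For instance, let $M=\langle e\rangle$ and $N=\langle f\rangle$ be one-dimensional abelian Lie algebras with ${}^{f}e=e$ and ${}^{e}f=f$; these are actions but are not compatible. Taking $s=[x,e]$ with $x=[e,f]\in M\flat N$ a single letter, the left composite gives $\xi^{M+N}_M([[e,f],e])=-\xi^{M+N}_M([[f,e],e])=-[{}^{f}e,e]=-[e,e]=0$, while the right composite gives $\xi^{M+N}_M([{}^{e}f,e])=\xi^N_M([f,e])={}^{f}e=e\neq 0$.

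So the missing idea is that compatibility (the standing hypothesis of this part of the paper, without which Theorem~\ref{thm:remark 2.16 in BL in LieAlg} would fail) must enter the proof, and it enters exactly to kill the second family of generators of $K$; symmetrically, for $\xi^{M+N}_N$ it is the \emph{first} family that requires it. This is also where the paper's own proof consumes the hypothesis: after the same kind of peeling, its residual two-letter identity $\xi^{M+N}_M([x,\overline{m}])=\xi^{M+N}_M([\epsilon(x),\overline{m}])$ follows from decomposability when $x\in N\flat M$, but when $x\in M\flat N$ it is exactly the commutativity of the second square in (\ref{diag:diagrammatic version of compatibility conditions}), that is, the compatibility of the actions. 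Once you repair Fact 2 by adding the compatibility computation for the generators ${}^{m}n-[m,n]$, your induction does go through and yields a correct proof, essentially parallel to the paper's but routed through the ideal $K$ rather than through letter-by-letter substitution.
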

\begin{proof}
We need to show that
\begin{equation}
\label{eq:coproduct lie action coequalizes maps}
\xi^{M+N}_M\left(\left(\binom{k_{N,M}}{k_{M,N}}\flat 1_M\right)(s)\right)=\xi^{M+N}_M\left(\left(\left(\xi^N_M+\xi^M_N\right)\flat 1_M\right)(s)\right)
\end{equation}
holds for each element $s\in((N\flat M)+(M\flat N))\flat M$. By Lemma~\ref{lemma:rewriting lemma} and Remark~\ref{rmk:choosing last element is possible}, it suffices to check this for the generators of the form $s=[x_k,[\ldots,[x_1,\overline{m}]\cdots]]$ with $x_i\in N\flat M$ or $x_i\in M\flat N$ and $\overline{m}\in M$.
This means that to prove (\ref{eq:coproduct lie action coequalizes maps}) it suffices to show the equality
\begin{equation}
\label{eq:the internal coproduct action coequalizes things in LieAlg}
\xi^{M+N}_M\left([x_k,[\ldots,[x_1,\overline{m}]\cdots]]\right)=\xi^{M+N}_M\left([\epsilon(x_k),[\ldots,[\epsilon(x_1),\overline{m}]\cdots]]\right)
\end{equation}
where 
\[
\epsilon(x_i)=
\begin{cases}
\xi^M_N(x_i) & \text{if $x_i\in M\flat N$,}\\
\xi^N_M(x_i) & \text{if $x_i\in N\flat M$.}
\end{cases}
\]
In order to see this, we can use the decomposability of the action $\xi^{M+N}_M$ on both sides of (\ref{eq:the internal coproduct action coequalizes things in LieAlg}) obtaining that the one on the left becomes
\[
\xi^{M+N}_M\left([x_k,\xi^{M+N}_M\left([\ldots,\xi^{M+N}_M\left([x_1,\overline{m}]\right)\cdots]\right)]\right)
\]
whereas the one on the right becomes 
\[
\xi^{M+N}_M\left([\epsilon(x_k),\xi^{M+N}_M\left([\ldots,\xi^{M+N}_M\left([\epsilon(x_1),\overline{m}]\right)\cdots]\right)]\right).
\]
This means that it suffices to show
\[
\xi^{M+N}_M\left([x,\overline{m}]\right)=\xi^{M+N}_M\left([\epsilon(x),\overline{m}]\right)
\]
for $x\in M\flat N$ or $x\in N\flat M$, but this is given again by decomposability of $\xi^{M+N}_M$.

Finally, we repeat the whole reasoning with $\xi^{M+N}_N$.
\end{proof}

\begin{prop}
\label{prop:Peiffer has two crossed module structures in LieAlg}
We have two crossed module structures
\begin{align*}
(M\xrightarrow{l_M}M\bowtie N,\xi^{M\bowtie N}_M) && (N\xrightarrow{l_N}M\bowtie N,\xi^{M\bowtie N}_N)
\end{align*}
where the actions of the Peiffer product are induced as above and the morphisms $l_M$ and $l_N$ are defined through
\begin{align}
\label{diag:defi of inclusions in Peiffer product in LieAlg}
\vcenter{\xymatrix{
M \ar@{^{(}->}[rd]^-{i_M} \ar@/_/[rdd]_-{l_M} && N \ar@{_{(}->}[ld]_-{i_N} \ar@/^/[ldd]^-{l_N}\\
 & M+N \ar@{>>}[d]^-{q}\\
 & M\bowtie N
}}
\end{align}
\end{prop}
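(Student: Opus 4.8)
The plan is to verify, for the pair $(M\xrightarrow{l_M}M\bowtie N,\xi^{M\bowtie N}_M)$, the two squares in Definition~\ref{defi:definition of internal crossed module of LieAlg}; the pair with $N$ then follows by exchanging the roles of $M$ and $N$. Throughout I would use the defining equality $\xi^{M\bowtie N}_M\circ(q\flat 1_M)=\xi^{M+N}_M$ of the induced action together with the fact that, by Lemma~\ref{lemma:-bX preserves coequalizers of reflexive graphs} and Remark~\ref{rmk:the maps coequalized by the Peiffer product form a reflexive graph in LieAlg}, the map $q\flat 1_M$ is a coequalizer, hence an epimorphism. A preliminary check that $\xi^{M\bowtie N}_M$ is a genuine internal action is immediate from this same defining equality and the naturality of $\eta^{-}$ and $\mu^{-}$.

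For the upper square I would compute $\xi^{M\bowtie N}_M\circ(l_M\flat 1_M)$ directly. Since $l_M=q\circ i_M$, functoriality of $\flat$ gives $l_M\flat 1_M=(q\flat 1_M)\circ(i_M\flat 1_M)$, so this composite equals $\xi^{M+N}_M\circ(i_M\flat 1_M)$. Evaluating on a simple bracket $[m,m']$ with $m,m'\in M$, the coproduct action sends $[i_M(m),m']$ to $[m,m']$ because $\xi^{M+N}_M$ extends the conjugation of $M$ (Definition~\ref{defi:definition of the coproduct action in LieAlg}); hence the composite is $\chi_M$, which is exactly the precrossed-module condition $[m,m']={}^{l_M(m)}m'$.

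The lower square $l_M\circ\xi^{M\bowtie N}_M=\chi_{M\bowtie N}\circ(1_{M\bowtie N}\flat l_M)$ is the substantial part, and here I would exploit that $q\flat 1_M$ is an epimorphism, so it suffices to prove the equality after precomposition with $q\flat 1_M$. The left-hand side then becomes $l_M\circ\xi^{M+N}_M$. For the right-hand side, functoriality of $\flat$ gives $(1_{M\bowtie N}\flat l_M)\circ(q\flat 1_M)=q\flat l_M=(q\flat q)\circ(1_{M+N}\flat i_M)$, and naturality of conjugation, $\chi_{M\bowtie N}\circ(q\flat q)=q\circ\chi_{M+N}$, together with $\chi_{M+N}\circ(1_{M+N}\flat i_M)=k_{M+N,M}$, turns it into $q\circ k_{M+N,M}$. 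Thus the whole square reduces to the single identity
\[
q\circ i_M\circ\xi^{M+N}_M=q\circ k_{M+N,M}\colon (M+N)\flat M\to M\bowtie N,
\]
which intuitively says that in the quotient the value ${}^{\tilde p}m$ of the action agrees with the formal bracket $[\tilde p,m]$.

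To prove this identity I would argue in two stages. First, for a single bracket $[\tilde p,m]$ with $m\in M$, I would show $q(i_M({}^{\tilde p}m))=q([\tilde p,m])$ by induction on the bracket-length of $\tilde p\in M+N$: when $\tilde p=i_M(m')$ the equality is automatic since $i_M$ is a morphism and $\xi^{M+N}_M$ extends conjugation, while when $\tilde p=i_N(n)$ it is precisely the defining relation ${}^{n}m-[n,m]\in K$ of the Peiffer product (Definition~\ref{defi:Peiffer product of Lie algebras}) --- this is the crucial point where the quotient is used. For the inductive step $\tilde p=[\tilde a,\tilde b]$ I would expand ${}^{[\tilde a,\tilde b]}m={}^{\tilde a}({}^{\tilde b}m)-{}^{\tilde b}({}^{\tilde a}m)$ via the action axiom, apply the induction hypothesis to the shorter words $\tilde a,\tilde b$, and recombine using the Jacobi identity $[[\tilde a,\tilde b],m]=[\tilde a,[\tilde b,m]]-[\tilde b,[\tilde a,m]]$ in $M+N$ and the fact that $q$ is a morphism. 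Second, I would lift this to an arbitrary generator $[x_k,[\ldots,[x_1,m]\cdots]]$ of $(M+N)\flat M$ by induction on the height $k$: decomposability of $\xi^{M+N}_M$ lets me write the action of the whole word as ${}^{x_k}m'$ with $m'=\xi^{M+N}_M([x_{k-1},[\ldots,[x_1,m]\cdots]])\in M$, and then the single-bracket case together with the height-$(k-1)$ hypothesis closes the induction. I expect the main obstacle to be the bookkeeping in this double induction and, above all, correctly isolating the one base case ($\tilde p\in i_N(N)$) in which the Peiffer relation is invoked; every other case is forced by functoriality, the action axioms, and the Jacobi identity.
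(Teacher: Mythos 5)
Correct, and essentially the paper's own proof: you handle the upper square by the identical direct computation, and the lower square by precomposing with the regular epimorphism $q\flat 1_M$, reducing to the identity $q\circ i_M\circ\xi^{M+N}_M=q\circ\chi_{M+N}\circ(1\flat i_M)$ (your $q\circ k_{M+N,M}$ is this map up to the fold $\binom{1_{M+N}}{i_M}$, a harmless abuse of notation), and then verifying that identity on generators by induction, with the Peiffer/coequalizer relation invoked exactly where you say it must be. The only deviation is organizational: the paper picks generators whose letters already lie in $M\cup N$ (via Lemma~\ref{lemma:rewriting lemma} and Remark~\ref{rmk:choosing last element is possible}), so a single induction on the height $k$ suffices, whereas you allow arbitrary acting elements $\tilde p\in M+N$ and therefore need your additional Stage 1 induction (action axiom plus Jacobi) --- that extra work is sound, but avoidable by the finer choice of generators.
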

\begin{proof}
We will prove the claim only for $\xi^{M\bowtie N}_M$, since the proof in the other case uses the same strategy. We need to show the commutativity of the following squares
\begin{align*}
\xymatrixcolsep{3pc}
\xymatrix{
M\flat M \ar[r]^-{\chi_M} \ar[d]_-{l_M\flat 1_M} & M \ar@{=}[d]\\
(M\bowtie N)\flat M \ar[r]^-{\xi^{M\bowtie N}_M} \ar[d]_-{1_{M\bowtie N}\flat l_M} & M \ar[d]^-{l_M}\\
(M\bowtie N)\flat (M\bowtie N) \ar[r]_-{\chi_{M\bowtie N}} & (M\bowtie N)
}
\end{align*}
For what concerns the commutativity of the upper square, we have the chain of equalities
\begin{align*}
\xi^{M\bowtie N}_M \circ (l_M\flat 1_M)&=\xi^{M\bowtie N}_M \circ (q_K\flat 1_M)\circ (i_M\flat 1_M)\\
&=\xi^{M+N}_M \circ (i_M\flat 1_M)\\
&=\chi_M
\end{align*}
given by the definition of the coproduct action and of the Peiffer product action.

As for the lower square, we can precompose with the regular epimorphism $q\flat 1_M$: this shows that the required commutativity is equivalent to the equation 
\[
q\circ \chi_{M+N}\circ (1\flat i_M)=q\circ i_M\circ \xi^{M+N}_M. 
\]
Consider a generator $[s_k,[\ldots,[s_1,m]\cdots]]\in (M+N)\flat M$ with $m\in M$ and \linebreak $s_j\in M+N$ or $s_j\in M$ (see Lemma~\ref{lemma:rewriting lemma} and Remark~\ref{rmk:choosing last element is possible}): we want to show that
\begin{equation}
\label{eq:equivalent condition to precrossed module condition in Liealg}
q\left(\xi^{M+N}_M\left([s_k,[\ldots,[s_1,m]\cdots]]\right)\right)=q\left([s_k,[\ldots,[s_1,m]\cdots]]\right).
\end{equation}
We are going to prove this by induction on $k$: 
\begin{itemize}
\item If $k=0$ we trivially have
\[
q\left(\xi^{M+N}_M\left(m\right)\right)=q(m);
\]
\item Suppose that (\ref{eq:equivalent condition to precrossed module condition in Liealg}) holds for $j<k$. Then by using the decomposability of $\xi^{M+N}_M$ and the equality
\[
q([s,m])=q(k_{N,M}([s,m]))=q\left(\xi^N_M([s,m])\right)
\]
induced from the definition of the Peiffer product as coequalizer, we have the chain of equalities
\begin{align*}
q\left(\xi^{M+N}_M\left([s_k,[\ldots,[s_1,m]\cdots]]\right)\right)&=q\left(\xi^{M+N}_M\left(\left[s_k,\left[\ldots,\xi^{M+N}_M\left([s_1,m]\right)\cdots\right]\right]\right)\right)\\
&=q\left(\left[s_k,\left[\ldots,\xi^{M+N}_M\left([s_1,m]\right)\cdots\right]\right]\right)\\
&=q\left(\left[s_k,\left[\ldots,\xi^N_M\left([s_1,m]\right)\cdots\right]\right]\right)\\
&=\left[q\left(s_k\right),\left[\ldots,q\left(\xi^N_M\left([s_1,m]\right)\right)\cdots\right]\right]\\
&=\left[q\left(s_k\right),\left[\ldots,q\left([s_1,m]\right)\cdots\right]\right]\\
&=q\left(\left[s_k,\left[\ldots,[s_1,m]\cdots\right]\right]\right).
\end{align*}
Notice that the induction hypothesis is used for the equality on the second line, considering $\xi^{M+N}_M\left([s_1,m]\right)$ as $m'\in M$.\qedhere
\end{itemize}
\end{proof}

Furthermore we know that the actions $\xi^M_N$ and $\xi^N_M$ are in turn induced by $\xi^{M\bowtie N}_M$ and $\xi^{M\bowtie N}_N$ through the morphisms $l_M$ and $l_N$, that is
\begin{align*}
\xymatrix{
M\flat N \ar[r]^-{l_M\flat 1_N} \ar[rd]_-{\xi^M_N} & (M\bowtie N)\flat N \ar[d]^-{\xi^{M\bowtie N}_N}\\
& N
}
&&
\xymatrix{
N\flat M \ar[r]^-{l_N\flat 1_M} \ar[rd]_-{\xi^N_M} & (M\bowtie N)\flat M \ar[d]^-{\xi^{M\bowtie N}_M}\\
& M
}
\end{align*}
commute. This can be proved by using the definition of the coproduct actions and the commutativity of diagrams (\ref{diag:diagram involving action of coproduct and action of Peiffer product in LieAlg}) and (\ref{diag:defi of inclusions in Peiffer product in LieAlg}).

Putting together Proposition~\ref{prop:two coterminal crossed modules induce compatible actions in LieAlg} and Proposition~\ref{prop:Peiffer has two crossed module structures in LieAlg}, we find the following characterization of compatible actions.

\begin{thm}
\label{thm:remark 2.16 in BL in LieAlg}
Consider two Lie algebras $M$ and $N$ acting on each other. These actions are compatible if and only if there exists a Lie algebra $L$ with two crossed module structures $(M\xrightarrow{\mu}L,\psi_M)$ and $(N\xrightarrow{\nu}L,\psi_N)$ such that the action of $M$ on~$N$ and the action of $N$ on $M$ are induced from $L$ and its actions, through $\mu$ and $\nu$.
\end{thm}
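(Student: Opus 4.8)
The plan is to establish the two implications separately, reducing each to material already developed. For the ``if'' direction, suppose we are handed a Lie algebra $L$ carrying crossed module structures $(M\xrightarrow{\mu}L,\psi_M)$ and $(N\xrightarrow{\nu}L,\psi_N)$ from which our two actions arise. Then, by construction, these actions coincide with the induced actions $\psi^M_N$ and $\psi^N_M$ of Proposition~\ref{prop:two coterminal crossed modules induce compatible actions in LieAlg}, which that proposition shows to be compatible. Hence this direction is a direct application of Proposition~\ref{prop:two coterminal crossed modules induce compatible actions in LieAlg} with $L$ in the role of $P$.

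For the ``only if'' direction I would assume compatibility and take $L:=M\bowtie N$ together with $\mu:=l_M$ and $\nu:=l_N$ from diagram~\eqref{diag:defi of inclusions in Peiffer product in LieAlg}. The crux---and the step I expect to carry the real weight---is to see that compatibility is exactly the condition under which the Peiffer-product actions $\xi^{M\bowtie N}_M$ and $\xi^{M\bowtie N}_N$ are well defined, i.e.\ under which the ideal $K$ acts trivially on $M$ and on $N$. Unwinding the action axioms of the coproduct action $\xi^{M+N}_N$ on a bracket yields ${}^{[n,m]}n'={}^{n}\bigl({}^{m}n'\bigr)-{}^{m}\bigl({}^{n}n'\bigr)=[n',{}^{m}n]$, so a generator $\bigl({}^{n}m\bigr)-[n,m]$ of $K$ sends $n'\in N$ to ${}^{({}^{n}m)}n'-[n',{}^{m}n]$; this vanishes for all $n'$ precisely when the first equation of~\eqref{eq:explicit compatibility condition for LieAlg} holds, and symmetrically for the action on $M$. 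Since $K$ is an ideal and the actions are by derivations, triviality on these generators forces triviality on all of $K$, so the coproduct actions descend along $q$ to genuine actions of $M\bowtie N$.

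With these induced actions available, the remainder is assembly. Proposition~\ref{prop:Peiffer has two crossed module structures in LieAlg} furnishes the crossed module structures $(M\xrightarrow{l_M}M\bowtie N,\xi^{M\bowtie N}_M)$ and $(N\xrightarrow{l_N}M\bowtie N,\xi^{M\bowtie N}_N)$ over the common base $L=M\bowtie N$, while the two commuting triangles displayed immediately before the theorem establish the identities $\xi^M_N=\xi^{M\bowtie N}_N\circ(l_M\flat 1_N)$ and $\xi^N_M=\xi^{M\bowtie N}_M\circ(l_N\flat 1_M)$, i.e.\ that the original actions are the ones induced from $L$ through $\mu=l_M$ and $\nu=l_N$. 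This produces the required Lie algebra and crossed module data, completing the direction. The only genuine obstacle is the well-definedness observation of the previous paragraph; once it is in place, both directions rest entirely on Propositions~\ref{prop:two coterminal crossed modules induce compatible actions in LieAlg} and~\ref{prop:Peiffer has two crossed module structures in LieAlg} together with the induced-action diagrams.
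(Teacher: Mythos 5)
Your proposal is correct in substance and follows the same skeleton as the paper's own proof: the ``if'' direction is exactly Proposition~\ref{prop:two coterminal crossed modules induce compatible actions in LieAlg}, and the ``only if'' direction takes $L=M\bowtie N$, $\mu=l_M$, $\nu=l_N$, invoking Proposition~\ref{prop:Peiffer has two crossed module structures in LieAlg} and the two triangles displayed just before the theorem. The one place where you genuinely diverge is the justification that the coproduct actions descend to $M\bowtie N$. The paper does this internally: Proposition~\ref{prop:the coproduct action coequalizes the two maps in LieAlg} shows that $\xi^{M+N}_M$ and $\xi^{M+N}_N$ coequalize the pair of maps defining the Peiffer product, and Lemma~\ref{lemma:-bX preserves coequalizers of reflexive graphs} guarantees that $q\flat 1_M$ and $q\flat 1_N$ are again coequalizers, so the induced actions exist by universal property. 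You instead argue element-wise: the generators of $K$ act trivially precisely because of compatibility, and the set of elements acting trivially is an ideal, hence contains $K$. Your version is more elementary and has the merit of making explicit exactly where compatibility enters---something the paper's proof of Proposition~\ref{prop:the coproduct action coequalizes the two maps in LieAlg} leaves somewhat hidden, since the final case $x\in M\flat N$ there is attributed to ``decomposability'' when it is really the compatibility diagrams of Remark~\ref{rmk:explanation of the definition of compatible actions in LieAlg} that are being used. What the paper's formulation buys in exchange is a construction phrased so that it transfers verbatim to a general semi-abelian setting, which is the stated aim of the paper.

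One point in your argument needs patching. The ideal $K$ is generated by \emph{both} families $({}^{n}m)-[n,m]$ and $({}^{m}n)-[m,n]$, and for the action on $N$ to descend you need \emph{all} of $K$---hence both families---to act trivially on $N$ (and likewise on $M$). You check $({}^{n}m)-[n,m]$ against $n'$ (which is the first equation of~(\ref{eq:explicit compatibility condition for LieAlg})) and, ``symmetrically'', $({}^{m}n)-[m,n]$ against $m'$ (the second one); the two crossed cases are not addressed. They do hold, automatically and without compatibility: for instance
\[
{}^{({}^{m}n)}n'=[{}^{m}n,n']={}^{[m,n]}n',
\]
where the first equality is conjugation in $N$ (since ${}^{m}n\in N$) and the second follows from the computation of the coproduct action, ${}^{[m,n]}n'=-{}^{[n,m]}n'=-[n',{}^{m}n]=[{}^{m}n,n']$. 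This is precisely the content of the ``automatically satisfied'' equations in Remark~\ref{rmk:explanation of the definition of compatible actions in LieAlg}. So the gap is easily filled, but it must be filled: without it, your ideal argument only shows that the ideal generated by \emph{one} family acts trivially on each of $M$ and $N$, which is not enough to factor the actions through $q$.
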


\section{The Peiffer product as a coproduct}

As a final result we want to show that the coproduct in $\XMod_L(\LieAlg)$ can be obtained through the Peiffer product: this coproduct has already been characterized in a different way in~\cite{CL00} by using semi-direct products instead of the Peiffer product, but this approach generalizes the one used for $\XMod_L(\Grp)$ in~\cite{Bro84}. Consequently, we also obtain that the Peiffer product defined above (and hence the one from~\cite{Khm99}) coincides with the one defined in~\cite{CMM17} when restricted to $\LieAlg$. 

\begin{defi}
\label{defi:definition of the action of L on the coproduct in LieAlg}
Given a pair of actions of $L$ respectively on $M$ and on $N$, we can define an action of $L$ on the coproduct $M+N$ by imposing the equalities
\[
{}^{l}s:=
\begin{cases}
{}^{l}m & \text{if $s=m\in M$}\\
{}^{l}n & \text{if $s=n\in N$}\\
\left[{}^{l}s_1,s_2\right]+\left[s_1,{}^{l}s_2\right], & \text{if $s=\left[s_1,s_2\right]\in M+N$}
\end{cases}
\]
and by extending the definition by linearity. In order to see that this is well defined it suffices to use Lemma~\ref{lemma:rewriting lemma} and induction on the length of $s\in M+N$.
\end{defi}

\begin{prop}
\label{prop:the action restricts to K in LieAlg}
The action $\psi_{M+N}$ restricts to an action on $K$. Consequently it induces an action $\psi_{M\bowtie N}$ of $L$ on the quotient $M\bowtie N$. 
\end{prop}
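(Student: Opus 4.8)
The plan is to fix $l\in L$ and study the map $D_l\colon M+N\to M+N$, $s\mapsto{}^ls$. The third clause of Definition~\ref{defi:definition of the action of L on the coproduct in LieAlg} is exactly the Leibniz rule $D_l([s_1,s_2])=[D_l(s_1),s_2]+[s_1,D_l(s_2)]$, so $D_l$ is a derivation of $M+N$. Since $K$ is an ideal and $D_l$ a derivation, a straightforward induction on the number of outer brackets in a spanning element $[a_1,[\ldots,[a_k,g]\ldots]]$ of $K$ shows that $D_l(K)\subseteq K$ as soon as $D_l$ carries the two families of generators of $K$ into $K$. So the whole statement reduces to a computation on the generators ${}^nm-[n,m]$ and ${}^mn-[m,n]$.

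First I would treat $g={}^nm-[n,m]$, the second family being entirely symmetric. On the bracket term the Leibniz rule gives $D_l([n,m])=[{}^ln,m]+[n,{}^lm]$ with ${}^ln\in N$ and ${}^lm\in M$. On the first term I would rewrite the action of $N$ on $M$ as the one induced from $L$ through $\nu$, namely ${}^nm={}^{\nu(n)}m$, and then apply the action axiom together with the crossed-module identity $[l,\nu(n)]=\nu({}^ln)$. The outcome I expect is ${}^l({}^nm)={}^{({}^ln)}m+{}^n({}^lm)$, so that
\[
D_l(g)=\bigl({}^{({}^ln)}m-[{}^ln,m]\bigr)+\bigl({}^n({}^lm)-[n,{}^lm]\bigr),
\]
and each summand is again a first-family generator of $K$: the first with $n$ replaced by ${}^ln\in N$, the second with $m$ replaced by ${}^lm\in M$. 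Hence $D_l(g)\in K$.

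The hard part will be exactly this generator computation: making the action axiom ${}^l({}^{\nu(n)}m)={}^{[l,\nu(n)]}m+{}^{\nu(n)}({}^lm)$ interact with the crossed-module condition so that the correction term ${}^{[l,\nu(n)]}m$ becomes precisely the Peiffer generator attached to ${}^ln$, all while keeping straight which of the several actions (of $L$ on $M$, of $L$ on $N$, of $N$ on $M$) is in play at each step and tracking signs correctly. Once $D_l(K)\subseteq K$ is known for every $l\in L$, the final claim is immediate: for each $l$ the endomorphism ${}^l(-)$ of $M+N$ descends along $q_K$ to a well-defined endomorphism of $M\bowtie N$, and since the two defining identities of an action are equalities already in $M+N$, they are preserved by the quotient homomorphism $q_K$, producing the induced action $\psi_{M\bowtie N}$ of $L$ on the Peiffer product.
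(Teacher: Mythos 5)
Your proposal is correct, and its core computation is the same as the paper's: both hinge on rewriting ${}^{n}m$ as ${}^{\nu(n)}m$, applying the action axiom ${}^{l}({}^{\nu(n)}m)={}^{[l,\nu(n)]}m+{}^{\nu(n)}({}^{l}m)$, and converting ${}^{[l,\nu(n)]}m$ into ${}^{({}^{l}n)}m$ via the equivariance condition $[l,\nu(n)]=\nu({}^{l}n)$. The differences are in the packaging, and they are real though minor. The paper computes modulo $K$: it applies $q$ everywhere and uses the Peiffer relation $q({}^{n}({}^{l}m))=q([n,{}^{l}m])$ to cancel two of the four terms, concluding $q({}^{l}({}^{n}m-[n,m]))=q\bigl({}^{({}^{l}n)}m-[{}^{l}n,m]\bigr)=0$. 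You instead stay inside $M+N$ and exhibit ${}^{l}({}^{n}m-[n,m])$ as the sum of two generators, $\bigl({}^{({}^{l}n)}m-[{}^{l}n,m]\bigr)+\bigl({}^{n}({}^{l}m)-[n,{}^{l}m]\bigr)$ --- the same identity, read before quotienting, and formally a bit sharper since it shows the generators are carried to sums of generators. More substantially, you make explicit two steps that the paper either asserts or outsources: the reduction to generators, which you justify by the Leibniz rule (each $D_{l}$ is a derivation, $K$ is spanned by iterated brackets on the generators, so induction applies), and the descent of the action to $M\bowtie N$, which you obtain by observing that the defining equations of an action hold in $M+N$ and therefore pass along the surjection $q$; the paper instead invokes Theorem 5.5 of \cite{Met17} together with strong protomodularity of $\LieAlg$ for this last step. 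Your route is more elementary and self-contained; what the paper's categorical citation buys is an argument that survives generalization from $\LieAlg$ to other semi-abelian categories, which is the direction the author is headed.
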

\begin{proof}
Let's show that ${}^{l}k$ lies in $K$ (that is $q\left({}^{l}k\right)=0$) as soon as $k\in K$. In order to do this, it suffices to prove it for the generators 
\[
\left({}^{n}m\right)-\left[n,m\right]
\qquad\text{and}\qquad
\left({}^{m}n\right)-\left[m,n\right],
\]
We prove it for the first one since the reasoning can be repeated for the other one:
\begin{align*}
q\left({}^{l}\left({}^{n}m-[n,m]\right)\right)&=q\left({}^{l}\left({}^{n}m\right)\right)-q\left({}^{l}\left([n,m]\right)\right)\\
&=q\left({}^{[l,\nu(n)]}m\right)+q\left({}^{\nu(n)}\left({}^{l}m\right)\right)-q\left([{}^{l}n,m]\right)-q\left([n,{}^{l}m]\right)\\
&=q\left({}^{\nu\left({}^{l}n\right)}m\right)+q\left(\left[n,{}^{l}m\right]\right)-q\left([{}^{l}n,m]\right)-q\left([n,{}^{l}m]\right)\\
&=q\left({}^{\left({}^{l}n\right)}m\right)-q\left([{}^{l}n,m]\right)\\
&=q\left({}^{\left({}^{l}n\right)}m-[{}^{l}n,m]\right)=0
\end{align*}
For the second part of the claim it suffices to apply Theorem 5.5 in~\cite{Met17} and use the fact that, as shown in~\cite{MM10.2}, $\LieAlg$ is a strongly protomodular category in the sense of~\cite{Bou00}.
\end{proof}

\begin{prop}
\label{prop:the action on the Peiffer product is part of a crossed module}
If in the previous situation the actions on $M$ and $N$ are part of crossed module structures $(M\xrightarrow{\mu}L,\psi_M)$ and $(N\xrightarrow{\nu}L,\psi_N)$, then also the induced action on the Peiffer product is part of a crossed module structure
\[
(M\bowtie N\xrightarrow{\verticalbinomial{\mu}{\nu}}L,\psi_{M\bowtie N}).
\]
\end{prop}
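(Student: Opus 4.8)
The plan is to first produce the boundary morphism and then to verify the two squares of Definition~\ref{defi:definition of internal crossed module of LieAlg} by pulling everything back along the regular epimorphism $q$ to the coproduct $M+N$, where the action $\psi_{M+N}$ is explicitly computable via Definition~\ref{defi:definition of the action of L on the coproduct in LieAlg}.

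First I would build the boundary. The universal property of the coproduct gives a morphism $\varphi\colon M+N\to L$ with $\varphi\circ i_M=\mu$ and $\varphi\circ i_N=\nu$. Since $\varphi$ is a Lie morphism its kernel is an ideal, so to see that it descends to $M\bowtie N$ it suffices to check that it kills the generators of $K$: for $\left({}^{n}m\right)-[n,m]$ one gets $\mu\bigl({}^{\nu(n)}m\bigr)-[\nu(n),\mu(m)]=0$ by the equivariance $\mu({}^{l}m)=[l,\mu(m)]$ of the first crossed module, and symmetrically for $\left({}^{m}n\right)-[m,n]$. Hence $\varphi$ factors uniquely as $\varphi=\overline{\varphi}\circ q$, and $\overline{\varphi}=\verticalbinomial{\mu}{\nu}\colon M\bowtie N\to L$ is the desired boundary. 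For the two crossed-module conditions I would use that $L\flat(-)$ and $(-)\flat(-)$ preserve regular epimorphisms (or simply that $q$ is surjective on underlying sets), so that $1_L\flat q$ and $q\flat q$ are regular epimorphisms and equalities of morphisms out of the relevant $\flat$-objects may be tested after precomposition with them. Recall also that the induced action satisfies $\psi_{M\bowtie N}\circ(1_L\flat q)=q\circ\psi_{M+N}$.

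Precomposing the equivariance square $\overline{\varphi}\circ\psi_{M\bowtie N}=\chi_L\circ(1_L\flat\overline{\varphi})$ with $1_L\flat q$ and using $\overline{\varphi}\circ q=\varphi$ reduces it to the precrossed identity $\varphi({}^{l}s)=[l,\varphi(s)]$ on $M+N$, which I prove by induction on the length of $s$: the base cases $s\in M$ and $s\in N$ are the equivariance of the two given crossed modules, and the inductive step for $s=[s_1,s_2]$ is the Jacobi identity applied to the Leibniz rule of Definition~\ref{defi:definition of the action of L on the coproduct in LieAlg}. Likewise, precomposing the Peiffer square $\chi_{M\bowtie N}=\psi_{M\bowtie N}\circ(\overline{\varphi}\flat 1_{M\bowtie N})$ with $q\flat q$, and using the naturality of conjugation together with the defining property of $\psi_{M\bowtie N}$, reduces it to the single identity
\[
q([s,s'])=q\bigl({}^{\varphi(s)}s'\bigr)\qquad\text{for all }s,s'\in M+N.
\]

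The main obstacle is this last identity. I would prove it by induction on the length of $s$, each time establishing it for all $s'$. For fixed $s$ both sides are $R$-linear in $s'$ and obey the same Leibniz recursion, namely $[q(s),q([s'_1,s'_2])]=[[q(s),q(s'_1)],q(s'_2)]+[q(s'_1),[q(s),q(s'_2)]]$ by Jacobi, and ${}^{\varphi(s)}[s'_1,s'_2]=[{}^{\varphi(s)}s'_1,s'_2]+[s'_1,{}^{\varphi(s)}s'_2]$ because $\psi_{M+N}$ is an action; so by Lemma~\ref{lemma:rewriting lemma} it suffices to check the identity when $s'\in M\cup N$ is a single letter. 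For such $s'$ I induct on the length of $s$: when $s$ is also a letter the four cases follow from the Peiffer conditions $[m,m']={}^{\mu(m)}m'$ and $[n,n']={}^{\nu(n)}n'$ and from the two defining relations $q({}^{n}m')=q([n,m'])$ and $q({}^{m}n')=q([m,n'])$ of $K$; when $s=[s_1,s_2]$ one expands with Jacobi, applies the induction hypothesis to the shorter words $s_1,s_2$, and collapses the nested actions using ${}^{[a,b]}x={}^{a}({}^{b}x)-{}^{b}({}^{a}x)$ together with $[\varphi(s_1),\varphi(s_2)]=\varphi([s_1,s_2])$. The delicate point is exactly this interleaving: the inductive step feeds a non-letter element ${}^{\varphi(s_2)}s'$ back into a bracket, so one must first upgrade the induction hypothesis for $s_1$ from letters to arbitrary second arguments (via the Leibniz recursion above) before re-using it; organizing the induction so that the shorter-word hypothesis is already available in its fully quantified form is the one step that requires care.
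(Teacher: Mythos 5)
Your proposal is correct and follows essentially the same route as the paper: reduce both crossed-module identities along the epimorphism $q$ and verify them by explicit computation in $M+N$, using the crossed-module conditions for $\mu$ and $\nu$ together with the defining relations of $K$. In fact your write-up is more thorough than the paper's, which only checks the representative case $s=[m,n]$, $s'=[m',n']$ and asserts that the argument ``easily generalizes to give the induction step'', whereas you set up the full double induction (correctly flagging that the inductive hypothesis must be quantified over arbitrary second arguments) and also make explicit the factorization of $\binom{\mu}{\nu}$ through $q$, which the paper leaves implicit.
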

\begin{proof}
Since $q\colon M+N\to M\bowtie N$ is an epimorphism, it suffices to show that for each $s,s'\in M+N$ and for each $l\in L$ the equalities
\begin{align*}
q\left(\binom{\mu}{\nu}\left({}^{l}s\right)\right)=q\left(\left[l,\binom{\mu}{\nu}(s)\right]\right) && q\left({}^{\binom{\mu}{\nu}(s')}(s)\right)=q\left([s',s]\right)
\end{align*}
hold. 

We are going to show them only in the case in which $s=\left[m,n\right]$ and $s'=\left[m',n'\right]$, but the reasoning easily generalizes to give the induction step needed for a complete proof by induction on the complexity of $s$ and $s'$.
Notice that we already have the equalities
\begin{align*}
\binom{\mu}{\nu}\left({}^{l}\left[m,n\right]\right)&=\binom{\mu}{\nu}\left(\left[{}^{l}m,n\right]+\left[m,{}^{l}n\right]\right)\\
&=\left[\mu\left({}^{l}m\right),\nu(n)\right]+\left[\mu(m),\nu\left({}^{l}n\right)\right]\\
&=\left[\left[l,\mu(m)\right],\nu(n)\right]+\left[\mu(m),\left[l,\nu(n)\right]\right]\\
&=\left[l,\left[\mu(m),\nu(n)\right]\right]\\
&=\left[l,\binom{\mu}{\nu}\left(\left[m,n\right]\right)\right],
\end{align*}
hence by applying $q$ to both sides we obtain the first equation. As for the second one we have
\begin{align*}
q\left({}^{\binom{\mu}{\nu}([m',n'])}([m,n])\right)&=q\left({}^{\mu(m')}\left({}^{\nu(n')}[m,n]\right)-{}^{\nu(n')}\left({}^{\mu(m')}[m,n]\right)\right)\\
&=q\left(\left[m,\left[{}^{m'}n',n\right]\right]-\left[n,\left[m,{}^{n'}m'\right]\right]\right)\\
&=\left[q\left(m\right),\left[q\left({}^{m'}n'\right),q\left(n\right)\right]\right]-\left[q\left(n\right),\left[q\left(m\right),q\left({}^{n'}m'\right)\right]\right]\\
&=\left[q\left(m\right),\left[q([m',n']),q\left(n\right)\right]\right]-\left[q\left(n\right),\left[q\left(m\right),q([n',m'])\right]\right]\\
&=q\left([[m',n'],[m,n]]\right).\qedhere
\end{align*}
\end{proof}

\begin{prop}
\label{prop:the Peiffer crossed module is the coproduct in LieAlg}
Given a pair of $L$-crossed modules
\[
(M\xrightarrow{\mu}L,\psi_M)
\qquad\text{and}\qquad
(N\xrightarrow{\nu}L,\psi_N),
\]
their coproduct in $\XMod_L(\LieAlg)$ is given by $(M\bowtie N\xrightarrow{\verticalbinomial{\mu}{\nu}}L,\psi_{M\bowtie N})$.
\end{prop}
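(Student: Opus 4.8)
The plan is to verify the universal property of the coproduct in $\XMod_L(\LieAlg)$ directly, exploiting the universal property of the coproduct $M+N$ in $\LieAlg$ together with that of the quotient $q\colon M+N\to M\bowtie N$. First I would check that the candidate structural morphisms $l_M$ and $l_N$ are indeed morphisms in $\XMod_L(\LieAlg)$, i.e.\ that they lie over $L$ and are $L$-equivariant. The first property is immediate from the construction of $\verticalbinomial{\mu}{\nu}$ as the map induced on $M\bowtie N$ by the copairing $\binom{\mu}{\nu}\colon M+N\to L$, together with $l_M=q\circ i_M$ and $l_N=q\circ i_N$. The second follows because $\psi_{M\bowtie N}$ is by construction induced from $\psi_{M+N}$ (Proposition~\ref{prop:the action restricts to K in LieAlg}), which restricts to $\psi_M$ and $\psi_N$ on the two summands by Definition~\ref{defi:definition of the action of L on the coproduct in LieAlg}; hence $l_M({}^{l}m)=q(i_M({}^{l}m))=q({}^{l}i_M(m))={}^{l}l_M(m)$, and similarly for $l_N$.

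Next, given any $L$-crossed module $(Q\xrightarrow{\theta}L,\psi_Q)$ together with morphisms $f\colon M\to Q$ and $g\colon N\to Q$ in $\XMod_L(\LieAlg)$, I would produce the comparison map. By the universal property of the coproduct in $\LieAlg$ there is a unique Lie algebra morphism $\binom{f}{g}\colon M+N\to Q$ with $\binom{f}{g}\circ i_M=f$ and $\binom{f}{g}\circ i_N=g$. The crucial step, which I expect to be the main point of the argument, is to show that $\binom{f}{g}$ annihilates the defining ideal $K$, so that it factors through $q$ as a morphism $h\colon M\bowtie N\to Q$. It suffices to check this on the generators of $K$. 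For a generator of the form $({}^{n}m)-[n,m]$, recall that the induced action is ${}^{n}m={}^{\nu(n)}m$; using that $f$ is $L$-equivariant, that $g$ lies over $L$ (so $\theta\circ g=\nu$), and that $Q$ is an $L$-crossed module (so $[q_1,q_2]={}^{\theta(q_1)}q_2$), one obtains
\[
\binom{f}{g}\bigl({}^{n}m\bigr)=f\bigl({}^{\nu(n)}m\bigr)={}^{\nu(n)}f(m)={}^{\theta(g(n))}f(m)=[g(n),f(m)]=\binom{f}{g}\bigl([n,m]\bigr),
\]
so that the difference is sent to $0$; the generators $({}^{m}n)-[m,n]$ are handled symmetrically. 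This is exactly where the Peiffer relations are forced to vanish precisely by the crossed module condition on the target $Q$, and I expect the rest of the proof to be formal bookkeeping.

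Finally I would verify that the induced $h$ is a morphism in $\XMod_L(\LieAlg)$ and is the unique one with $h\circ l_M=f$ and $h\circ l_N=g$. That $h$ lies over $L$ follows by cancelling the epimorphism $q$ from $\theta\circ h\circ q=\theta\circ\binom{f}{g}=\binom{\mu}{\nu}=\verticalbinomial{\mu}{\nu}\circ q$, the middle equality being checked on $i_M$ and $i_N$. That $h$ is $L$-equivariant reduces, again after cancelling $q$ and using that $\psi_{M\bowtie N}$ is induced from $\psi_{M+N}$, to the identity $\binom{f}{g}({}^{l}s)={}^{l}\binom{f}{g}(s)$ for all $s\in M+N$; I would prove this by induction on the complexity of $s$ following Definition~\ref{defi:definition of the action of L on the coproduct in LieAlg}, the base cases being the $L$-equivariance of $f$ and $g$ and the inductive step using that $\binom{f}{g}$ is a Lie algebra morphism together with the derivation identity ${}^{l}[q_1,q_2]=[{}^{l}q_1,q_2]+[q_1,{}^{l}q_2]$ for the action on $Q$. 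The factorization relations $h\circ l_M=h\circ q\circ i_M=\binom{f}{g}\circ i_M=f$ and $h\circ l_N=g$ hold by construction, and uniqueness is immediate: any $h'$ with $h'\circ l_M=f$ and $h'\circ l_N=g$ satisfies $h'\circ q\circ i_M=f$ and $h'\circ q\circ i_N=g$, whence $h'\circ q=\binom{f}{g}=h\circ q$ by the universal property of $M+N$, and therefore $h'=h$ since $q$ is an epimorphism.
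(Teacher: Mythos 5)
Your proof is correct, and it verifies the same universal property as the paper does, but the two key steps are carried out by genuinely different means. To factor the comparison map through $q$, you use the presentation of $M\bowtie N$ as $(M+N)/K$ and kill the generators of $K$ by an element-wise computation (equivariance of $f$, the identity $\theta\circ g=\nu$, and the Peiffer condition in the target); the paper instead uses the coequalizer presentation of the Peiffer product and shows diagrammatically, in the $\flat$-calculus, that $\binom{z_M}{z_N}$ coequalizes $\binom{k_{N,M}}{k_{M,N}}$ and $\xi^N_M+\xi^M_N$ --- the same three ingredients appearing there as a chain of equalities between composite morphisms rather than between elements. Likewise, for the $L$-equivariance of the induced map you argue by induction on the complexity of words in $M+N$, whereas the paper invokes algebraic coherence of $\LieAlg$ (Theorem 3.18 of~\cite{CGVdL15b}): since $l_M$ and $l_N$ are jointly strongly epimorphic, so are $1\flat l_M$ and $1\flat l_N$, hence equivariance need only be checked after precomposition with these, where it holds by hypothesis. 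Your route is more elementary and self-contained, requiring nothing beyond the coproduct and the quotient, and it also spells out points the paper leaves implicit, namely that $(l_M,1_L)$ and $(l_N,1_L)$ are themselves morphisms of $L$-crossed modules and that the comparison morphism is unique. What the paper's route buys is independence from element-wise reasoning: its computations are purely arrow-theoretic, which is what allows them to survive in a general semi-abelian category, in keeping with the paper's stated aim and the companion work~\cite{dMVdL19}.
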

\begin{proof}
Suppose we have a crossed module $(Z\xrightarrow{z}L,\psi_Z)$ with two morphisms $(z_M,1_L)$ and $(z_N,1_L)$ as in the following diagram
\[
\xymatrixrowsep{3pc}
\xymatrix{
(M\xrightarrow{\mu}L,\psi_M) \ar@/_/[rdd]_-{(z_M,1_L)} \ar[rd]^-{(l_M,1_L)} && (N\xrightarrow{\nu}L,\psi_N) \ar@/^/[ldd]^-{(z_N,1_L)} \ar[ld]_-{(l_N,1_L)}\\
& (M\bowtie N\xrightarrow{\verticalbinomial{\mu}{\nu}}L,\psi_{M\bowtie N}) \ar@{.>}[d]|-{(\verticalbinomial{z_M}{z_N},1_L)}\\
& (Z\xrightarrow{z}L,\psi_Z)
} 
\]
We want to construct the dotted morphism of crossed modules such that the two triangles commute. The first step is constructing the arrow $\verticalbinomial{z_M}{z_N}$ through the diagram 
\begin{align*}
\xymatrixcolsep{4pc}
\vcenter{\xymatrix{
(N\flat M)+(M\flat N) \ar@<2pt>[r]^-{\binom{k_{N,M}}{k_{M,N}}} \ar@<-2pt>[r]_-{\xi^N_M+\xi^M_N} & M+N \ar@{>>}[r]^-{q} \ar[rd]_-{\binom{z_M}{z_N}} & M\bowtie N \ar@{.>}[d]^-{\verticalbinomial{z_M}{z_N}}\\
&& Z
}}
\end{align*}
In order to do so we need to show that $\binom{z_M}{z_N}$ coequalizes the arrows on the left. This is done by using the Peiffer condition for $(Z\xrightarrow{z}L,\psi_Z)$ and the fact that $(z_M,1_L)$ and $(z_N,1_L)$ are morphisms of crossed modules:
\begin{align*}
\binom{z_M}{z_N}\circ(\xi^N_M+\xi^M_N)&=\binom{z_M\circ \xi^N_M}{z_N\circ \xi^M_N}=\binom{\psi_Z\circ(\nu\flat z_M)}{\psi_Z\circ(\mu\flat z_N)}\\
&=\psi_Z\circ\binom{\nu\flat z_M}{\mu\flat z_N}=\psi_Z\circ\binom{(z\flat 1)\circ(z_N\flat z_M)}{(z\flat 1)\circ(z_M\flat z_N)}\\
&=\psi_Z\circ(z\flat 1)\circ\binom{z_N\flat z_M}{z_M\flat z_N}=\chi_Z\circ\binom{z_N\flat z_M}{z_M\flat z_N}\\
&=\binom{\binom{z_M}{z_N}\circ k_{N,M}}{\binom{z_M}{z_N}\circ k_{M,N}}=\binom{z_M}{z_N}\circ\binom{k_{N,M}}{k_{M,N}}.
\end{align*}
Finally we need to show the commutativity of the diagrams 
\begin{align*}
\xymatrix{
L\flat (M\bowtie N) \ar[d]_-{\psi_{M\bowtie N}} \ar[r]^-{1\flat \verticalbinomial{z_M}{z_N}} & L\flat Z \ar[d]^-{\psi_Z}\\
M\bowtie N \ar[r]_-{\verticalbinomial{z_M}{z_N}} & Z
}
&&
\xymatrix{
M\bowtie N \ar[d]_-{\verticalbinomial{\mu}{\nu}} \ar[r]^-{\verticalbinomial{z_M}{z_N}} & Z \ar[d]^-{z}\\
L \ar@{=}[r] & L
}
\end{align*}
To obtain the second one it suffices to precompose with the epimorphism $q$
\[
\verticalbinomial{\mu}{\nu}\circ q=\binom{\mu}{\nu}=z\circ\binom{z_M}{z_N}=\verticalbinomial{z_M}{z_N}\circ q
\]
whereas for the first one, we need to use the fact that $\LieAlg$ is an algebraically coherent category, and hence $1\flat l_M$ and $1\flat l_M$ are jointly strongly epimorphic, since $l_M$ and $l_N$ are so (see Theorem 3.18 in~\cite{CGVdL15b} for further details). This means that in order to prove the claim, we only need to check the commutativity of the outer rectangles
\begin{align*}
\xymatrix{
L\flat M \ar[r]^-{1\flat l_M} \ar[d]_-{\psi_M} & L\flat (M\bowtie N) \ar[d]_-{\psi_{M\bowtie N}} \ar[r]^-{1\flat \verticalbinomial{z_M}{z_N}} & L\flat Z \ar[d]^-{\psi_Z}\\
M \ar[r]_-{l_M} & M\bowtie N \ar[r]_-{\verticalbinomial{z_M}{z_N}} & Z
}
&&
\xymatrix{
L\flat N \ar[r]^-{1\flat l_N} \ar[d]_-{\psi_N} & L\flat (M\bowtie N) \ar[d]_-{\psi_{M\bowtie N}} \ar[r]^-{1\flat \verticalbinomial{z_M}{z_N}} & L\flat Z \ar[d]^-{\psi_Z}\\
N \ar[r]_-{l_N} & M\bowtie N \ar[r]_-{\verticalbinomial{z_M}{z_N}} & Z
}
\end{align*}
which is given by hypothesis.
\end{proof}

\section*{Acknowledgements}
I would like to thank Sandra Mantovani, Andrea Montoli and Tim Van der Linden for useful comments and suggestions.

\providecommand{\bysame}{\leavevmode\hbox to3em{\hrulefill}\thinspace}
\providecommand{\MR}{\relax\ifhmode\unskip\space\fi MR }
\providecommand{\MRhref}[2]{%
  \href{http://www.ams.org/mathscinet-getitem?mr=#1}{#2}
}
\providecommand{\href}[2]{#2}

\end{document}